\journal{Complex Analysis and Operator Theory}
\theoremstyle{plain}                
\newtheorem{thm}{Theorem}[section]  
\newtheorem{prop}[thm]{Proposition}
\theoremstyle{definition}
\newtheorem{defn}[thm]{Definition}
\theoremstyle{remark}
\newtheorem{rem}[thm]{Remark}
\DeclareMathOperator{\Arg}{Arg}   
\DeclareMathOperator{\sgn}{sgn}
\providecommand{\abs}[1]{\lvert#1\rvert} 
\providecommand{\norm}[1]{\lVert#1\rVert}
\providecommand{\inner}[1]{\langle#1\rangle} 
\def\re{\mathop{\rm Re}\nolimits}
\def\CC{\mathbb C}
\def\RR{\mathbb R}
\def\NN{\mathbb N}
\def\exp{\mathop{\rm e}\nolimits}
\newcommand{\overbar}[1]{\mkern 1.5mu\overline{\mkern-1.5mu#1\mkern-1.5mu}\mkern 1.5mu}
\begin{document}

\begin{frontmatter}

\title{Admissibility of diagonal state-delayed systems with a one-dimensional input space}
\author[UoL]{Jonathan R. Partington}
\author[SUT]{Rados{\l}aw Zawiski\corref{correspondingauthor}\fnref{additionalaffiliation}}
\ead{radoslaw.zawiski@polsl.pl}
\cortext[correspondingauthor]{Corresponding author}
\fntext[additionalaffiliation]{The research presented here was done when the second author was a Marie Curie Research Fellow at the School of Mathematics, University of Leeds, UK}

\address[UoL]{School of Mathematics, University of Leeds, LS2 9JT Leeds, UK}
\address[SUT]{Institute of Automatic Control, Silesian University of Technology, \\Akademicka 16, 44-100 Gliwice, Poland}

\begin{abstract}
In this paper we investigate admissibility of the control operator $B$ in a Hilbert space state-delayed dynamical system setting of the form $\dot{z}(t)=Az(t-\tau)+Bu(t)$, where $A$ generates a diagonal semigroup and $u$ is a scalar input function. Our approach is based on the Laplace embedding between $L^2$ and the Hardy space. The sufficient conditions for infinite-time admissibility are stated in terms of eigenvalues of the generator and in terms of the control operator itself. 
\end{abstract}

\begin{keyword}
admissibility, state delay, diagonal system, reciprocal system
\end{keyword}

\end{frontmatter}

\section{Introduction}
In this article we analyse dynamical system with delay in the state variable from the perspective of admissibility of the control operator. Thus the object of our interest is an abstract dynamical system
\begin{equation}\label{eqn diagonal delay system introduction}
\left\{\begin{array}{ll}
        \dot{z}(t)=Az(t-\tau)+Bu(t)\\
        z(0)=z_0,        \\
        \end{array}
\right.
\end{equation}
where, in general, $A:D(A)\subset X\rightarrow X$ is the infinitesimal generator of a $C_0$-semigroup $(T(t))_{t\geq0}$ on $X$. The Hilbert space $X$ possesses a sequence of normalized eigenvectors $(\phi_k)_{k\in\NN}$ forming a Riesz basis, with associated eigenvalues $(\lambda_k)_{k\in\NN}$. 
The input function is $u\in L^2(0,\infty;\CC)$, $B$ is the control operator and  $0<\tau<\infty$ is a delay.

Infinite-time admissibility of $B$ in the undelayed case of \eqref{eqn diagonal delay system introduction} is well analysed and the necessary and sufficient conditions for it were given using e.g. Carleson measures. 
In particular, the link between Carleson measures and infinite-time admissibility was studied in \cite{Ho_Russell_1983,Ho_Russell_1983_err,Weiss_1988}. 
Those results were extended to normal semigroups \cite{Weiss_1999}, then generalized to the case when $u\in L^2(0,\infty;t^{\alpha}dt)$ for $\alpha\in(-1,0)$ in \cite{Wynn_2010} and further to the case $u\in L^2(0,\infty;w(t)dt)$ in \cite{Jacob_Partington_Pott_2013,Jacob_Partington_Pott_2014}. 
For a thorough presentation of admissibility results, not restricted to diagonal systems, for the undelayed case we refer the reader to \cite{Jacob_Partington_2004} and a rich list of references therein. 

The results in \cite{Grabowski_Callier_1996} and \cite{Engel_1999} form a basis for considerations in \cite{Batkai_Piazzera} in terms of developing a correct setting in which we conduct the admissibility analysis for state-delayed diagonal systems. 
The same setting is used by us for the admissibility analysis in a more general case when \eqref{eqn diagonal delay system introduction} takes a form of the so-called retarded equation, where we assume only a contraction property of the undelayed semigroup $(T(t))_{t\geq0}$ (full details will be published elsewhere \cite{Partington_Zawiski_2018a}) 

Section~\ref{sec:2} contains the necessary background results, leading to the main results in Section~\ref{sec:3}.
An example is given in Section~\ref{sec:4}, and some conclusions are given in Section~\ref{sec:5}.

\section{Preliminaries}\label{sec:2}

Apart from definitions introduced in the previous section throughout this paper we use the following Sobolev spaces (see \cite{Kreuter_2015} for vector valued functions or \cite[Chapter 5]{Evans} for functionals):
$W^{1,2}(J,X):=\{f\in L^2(J,X):\frac{d}{dt}f(t)\in L^2(J,X)\}$, $W_{c}^{1,2}(J,X):=\{f\in W^{1,2}(J,X):f \hbox{ has compact support}\}$ and $W_{0}^{1,2}(J,X):=\{f\in W^{1,2}(J,X):\ f(\partial J)=0\}$, where $J$ is an interval. 

For any $\alpha\in\mathbb{R}$ we denote $\CC_{\alpha}:=\{s\in\mathbb{C}:\text{Re}s>\alpha \}$ with an exception for two special cases, namely $\CC_{+}:=\{s\in\mathbb{C}:\re s>0 \}$ and $\CC_{-}:=\{s\in\mathbb{C}:\text{Re}s<0\}$.

The Hardy space $H^2(\CC_+)$ consists of all analytic functions $f:\CC_+\rightarrow\CC$ for which 
\begin{equation}\label{eqn defn Hardy space element}
\sup_{\alpha>0}\int_{-\infty}^{\infty}\abs{f(\alpha+i\omega)}^2 \, d\omega<\infty.
\end{equation}
If $f\in H^2(\CC_+)$ then for almost every $\omega\in\RR$ the limit
\begin{equation}\label{eqn defn boundary trace}
f^*(i\omega)=\lim_{\alpha\downarrow 0}f(\alpha+i\omega)
\end{equation}
exists and defines a function  $f^*\in L^2(i\RR)$ called the \textit{boundary trace} of $f$.  Using boundary traces $H^2(\CC_+)$ is made into a Hilbert space with the inner product defined as
\begin{equation}\label{eqn defn inner product on H2 space}
\inner{f,g}_{H^2(\CC_+)}:=\inner{f^*,g^*}_{L^2(i\RR)}:=\frac{1}{2\pi}\int_{-\infty}^{+\infty}f^*(i\omega)\bar{g}^*(i\omega) \, d\omega
\end{equation}
for every $f,g\in H^2(\CC_+)$. For more information about Hardy spaces see \cite{Partington_1988}, \cite{Garnett} or \cite{Koosis}. We also make use of the following 
\begin{thm}[Paley-Wiener]\label{thm Paley-Wiener}
Let $Y$ be a Hilbert space. Then the Laplace transform $\mathcal{L}:L^2(0,\infty;Y)\rightarrow H^2(\CC_+;Y)$ is an isometric isomorphism.
\end{thm}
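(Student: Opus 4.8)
The plan is to establish the Paley–Wiener theorem by exhibiting the Laplace transform as the composition of two isometries whose combined effect is known from the scalar case. First I would recall the scalar statement: for $Y=\CC$, the classical Paley–Wiener theorem asserts that $\mathcal{L}\colon L^2(0,\infty;\CC)\to H^2(\CC_+)$ is an isometric isomorphism, where on the Hardy-space side the inner product is the one in \eqref{eqn defn inner product on H2 space}. This scalar result I would take as the foundational ingredient, proving it (if needed) through the Fourier–Plancherel theorem: extending $f\in L^2(0,\infty)$ by zero to all of $\RR$ and writing $(\mathcal{L}f)(\alpha+i\omega)=\int_0^\infty f(t)e^{-(\alpha+i\omega)t}\,dt$ exhibits $\mathcal{L}f$ on each vertical line $\re s=\alpha$ as the Fourier transform of $t\mapsto f(t)e^{-\alpha t}$, so that \eqref{eqn defn Hardy space element} is finite and the boundary trace as $\alpha\downarrow0$ recovers $\widehat f$, giving the isometry via Plancherel.

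The main step is then to bootstrap from scalars to a general Hilbert space $Y$. The natural device is to fix an orthonormal basis $(e_n)_{n}$ of $Y$ and decompose $g\in L^2(0,\infty;Y)$ into its coordinate functions $g_n(t)=\inner{g(t),e_n}_Y$, each lying in $L^2(0,\infty;\CC)$, with $\norm{g}^2_{L^2(0,\infty;Y)}=\sum_n\norm{g_n}^2_{L^2(0,\infty)}$ by Parseval. Because the Laplace transform acts coordinatewise, $(\mathcal{L}g)(s)=\sum_n(\mathcal{L}g_n)(s)\,e_n$, and applying the scalar Paley–Wiener isometry to each coordinate gives $\norm{\mathcal{L}g}^2_{H^2(\CC_+;Y)}=\sum_n\norm{\mathcal{L}g_n}^2_{H^2(\CC_+)}=\sum_n\norm{g_n}^2_{L^2(0,\infty)}=\norm{g}^2_{L^2(0,\infty;Y)}$, which is the desired isometry. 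Surjectivity follows by reversing this: given $F\in H^2(\CC_+;Y)$ with coordinates $F_n=\inner{F,e_n}_Y\in H^2(\CC_+)$, scalar surjectivity produces $g_n\in L^2(0,\infty)$ with $\mathcal{L}g_n=F_n$, and $\sum_n\norm{g_n}^2<\infty$ assembles a genuine element $g\in L^2(0,\infty;Y)$ with $\mathcal{L}g=F$.

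The hard part will be the justification of the interchange of the infinite coordinate sum with the defining integrals and limits of the Hardy space, i.e. verifying that the coordinatewise Laplace transform really reproduces the vector-valued one and that the $H^2(\CC_+;Y)$ norm decomposes as claimed through the boundary traces \eqref{eqn defn boundary trace}. Concretely, one must check that $\norm{F}^2_{H^2(\CC_+;Y)}=\sum_n\norm{F_n}^2_{H^2(\CC_+)}$ uniformly in the line $\re s=\alpha$ and in the boundary limit; this rests on monotone convergence for the partial sums together with Fubini to commute the $\omega$-integral with the sum, both of which are legitimate because the integrands are nonnegative. Once this interchange is secured, the scalar case supplies everything else, so I would present the argument as a clean reduction to scalars rather than reproving the analytic core in the vector-valued setting.
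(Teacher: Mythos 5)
Your argument is essentially correct, but note that the paper does not prove this theorem at all: it states it and refers to Rudin for the scalar case and to Arendt et al.\ (Theorem 1.8.3) for the vector-valued one, so there is no internal proof to match. Your reduction --- scalar Paley--Wiener via Fourier--Plancherel, then coordinatewise bootstrapping along an orthonormal basis of $Y$ --- is a legitimate and standard alternative to the route in the cited reference, which instead establishes a vector-valued Plancherel theorem directly for Hilbert-space-valued functions. Two points deserve explicit care in your write-up. First, the identity $\norm{F}^2_{H^2(\CC_+;Y)}=\sum_n\norm{F_n}^2_{H^2(\CC_+)}$ requires exchanging $\sup_{\alpha>0}$ with the coordinate sum, not just the $\omega$-integral with the sum; this is where you need the standard fact that $\alpha\mapsto\int_{-\infty}^{\infty}\abs{F_n(\alpha+i\omega)}^2\,d\omega$ is nonincreasing in $\alpha$ for $H^2$ functions, so that both the supremum and the boundary-trace limit are monotone limits as $\alpha\downarrow0$ and monotone convergence applies in both indices simultaneously. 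Second, the orthonormal-basis decomposition presupposes separability of $Y$; for general $Y$ one should observe that each Bochner-measurable $g$ is almost everywhere separably valued (and each $F\in H^2(\CC_+;Y)$, being continuous on a separable domain, has separable range), so the argument restricts to a separable closed subspace without loss. For the application in the paper only $Y=\CC$ and $Y=W^{-1,2}(-\tau,0;\CC)$ are used, so these refinements cost nothing, and your surjectivity argument by reassembling coordinates is fine.
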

For a detailed proof of Theorem~\ref{thm Paley-Wiener} see \cite[Chapter 19]{Rudin_1987} for the scalar version or \cite[Theorem 1.8.3]{Arendt_et_al} for the vector-valued one.

\subsection{The delayed equation setting}

For details of the setting in which we consider a state-delayed diagonal system see \cite[Chapter VI.6]{Engel_Nagel} and \cite[Chapter 3.1]{Batkai_Piazzera}. Consider a function  $z:[-\tau,\infty)\rightarrow X$. For each $t\geq0$ we call the function $z_{t}:[-\tau,0]\rightarrow X$, $z_{t}(\sigma):=z(t+\sigma)$, a \textit{history segment} with respect to $t\geq0$. With history segments we consider a function called the \textit{history function} of $z$, that is $h_{z}:[0,\infty)\rightarrow L^2(-\tau,0;X)$, $h_z(t):=z_{t}$. In \cite[Lemma 3.4]{Batkai_Piazzera} we find the following

\begin{prop}\label{prop history function derivative}
Let $1\leq p<\infty$ and $z:[-\tau,\infty)\rightarrow X$ be a function which belongs to $W_{loc}^{1,p}(-\tau,\infty;X)$. Then the history function $h_z:t\rightarrow z_t$ of $z$ is continuously differentiable from $\mathbb{R}_+$ into $L^p(-\tau,0;X)$ with derivative 
\[
\frac{\partial}{\partial t}h_{z}(t)=\frac{\partial}{\partial\sigma}z_{t}.
\]
\end{prop}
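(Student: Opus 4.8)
The plan is to pass to the difference quotient of $h_z$ and to identify its limit with the $\sigma$-derivative of $z_t$ by combining the fundamental theorem of calculus with the continuity of translation in $L^p$. Throughout, write $v:=z'\in L^p_{loc}(-\tau,\infty;X)$, so that the candidate derivative is the map $t\mapsto v(t+\cdot)$, regarded as an element of $L^p(-\tau,0;X)$; note that $\frac{\partial}{\partial\sigma}z_t$ is exactly this element, since $z_t(\sigma)=z(t+\sigma)$ gives $\frac{\partial}{\partial\sigma}z_t(\sigma)=z'(t+\sigma)=v(t+\sigma)$.

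First I would fix $t>0$ and, for small $h\neq 0$, evaluate the difference $h_z(t+h)-h_z(t)$ pointwise in $\sigma\in[-\tau,0]$, obtaining $z(t+h+\sigma)-z(t+\sigma)$. Because $z\in W^{1,p}_{loc}$ admits an absolutely continuous representative, the vector-valued fundamental theorem of calculus yields $z(t+h+\sigma)-z(t+\sigma)=\int_0^h v(t+r+\sigma)\,dr$. Subtracting the candidate derivative and dividing by $h$ gives
\[
\frac{1}{h}\big(h_z(t+h)-h_z(t)\big)(\sigma)-v(t+\sigma)=\frac{1}{h}\int_0^h\big(v(t+r+\sigma)-v(t+\sigma)\big)\,dr.
\]

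Next I would estimate the $L^p(-\tau,0;X)$ norm of the right-hand side. By Minkowski's integral inequality (equivalently, since the norm of a Bochner integral is at most the integral of the norm) it is bounded by $\frac{1}{h}\int_0^h\|v(t+r+\cdot)-v(t+\cdot)\|_{L^p(-\tau,0;X)}\,dr$, i.e.\ by the average over $[0,h]$ of a translation-difference norm of $v(t+\cdot)$. Choosing $|h|<t$ keeps every argument $t+r+\sigma$ inside the compact subinterval $[t-|h|-\tau,\,t+|h|]\subset(-\tau,\infty)$, on which $v$ is genuinely $L^p$; extending its restriction by zero to $L^p(\mathbb{R};X)$ lets me invoke the continuity of translation in $L^p$. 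Consequently $r\mapsto\|v(t+r+\cdot)-v(t+\cdot)\|_{L^p}$ is continuous and vanishes at $r=0$, so its average over $[0,h]$ tends to $0$ as $h\to 0$; the case $h<0$ is symmetric. This establishes two-sided differentiability of $h_z$ with derivative $t\mapsto v(t+\cdot)=\frac{\partial}{\partial\sigma}z_t$.

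Finally, for continuity of the derivative I would observe that $t\mapsto v(t+\cdot)$ is itself continuous from $\mathbb{R}_+$ into $L^p(-\tau,0;X)$, once more by the continuity of translation in $L^p$ (localizing $t$ to a compact interval to obtain honest $L^p$ estimates from the $L^p_{loc}$ hypothesis). The main obstacle is precisely this reliance on translation continuity: it holds for $1\leq p<\infty$ — exactly the hypothesis of the proposition, proved by density of compactly supported continuous functions — and fails for $p=\infty$, which is the essential reason the statement is restricted to $p<\infty$.
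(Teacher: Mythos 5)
Your argument is correct. Note that the paper does not actually prove this proposition: it is quoted from B\'atkai--Piazzera [Lemma 3.4] and used as a black box, so your self-contained difference-quotient proof is, if anything, more than the paper supplies; it follows the standard route (difference quotient, fundamental theorem of calculus, Minkowski's integral inequality, strong continuity of translation on $L^p$) that underlies the cited lemma as well. Two small points are worth making explicit. First, the vector-valued fundamental theorem of calculus you invoke is legitimate here precisely because the paper defines $W^{1,p}(J;X)$ by requiring the \emph{distributional} derivative to lie in $L^p(J;X)$; with that definition every such $z$ has an absolutely continuous representative satisfying $z(b)-z(a)=\int_a^b z'(r)\,dr$ in any Banach space, with no Radon--Nikodym hypothesis --- this deserves a sentence, since a.e.\ differentiability of $X$-valued absolutely continuous functions can fail in general. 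Second, your restriction $|h|<t$ is vacuous at the endpoint $t=0$, where $t+h+\sigma$ may leave $[-\tau,\infty)$ for $h<0$; there one takes $h>0$ only and reads ``continuously differentiable on $\mathbb{R}_+$'' with the one-sided derivative at $0$. Your closing observation that continuity of translations is exactly what requires $p<\infty$ is the right diagnosis of the hypothesis.
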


Define the Cartesian product $\mathcal{X}:=X\times L^2(-\tau,0;X)$ with an inner product 
\begin{equation}\label{eqn defn of inner product on Cartesian product}
 \bigg\langle\binom{x}{f},\binom{y}{g}\bigg\rangle_{\mathcal{X}}:=\langle x,y\rangle_{X}+\langle f,g\rangle_{L^2(-\tau,0;\CC)}.
\end{equation}
Then $\mathcal{X}$ becomes a Hilbert space $(\mathcal{X},\|\cdot\|_{\mathcal{X}})$ with the norm $\|\binom{x}{f}\|_{\mathcal{X}}^2=\|x\|_{X}^{2}+\|f\|_{L^2}^{2}$. Consider a linear, autonomous delay differential equation of the form
\begin{equation}\label{eqn delay autonomous diff eq}
\left\{\begin{array}{ll}
        \dot{z}(t)=Az(t)+\Psi z_t\\
        z(0)=x,        \\
        z_0=f,		\\
       \end{array}
\right.
\end{equation}
where $\Psi\in\mathcal{L}(W^{1,2}(-\tau,0;X),X)$ is a \textit{delay operator}, the pair $x\in D(A)$ and $f\in L^2(-\tau,0;X)$ forms an initial condition. Due to Proposition~\ref{prop history function derivative} equation \eqref{eqn delay autonomous diff eq} may be written as an abstract Cauchy problem
\begin{equation}\label{eqn defn abstract Cauchy problem}
\left\{\begin{array}{ll}
        \dot{v}(t)=\mathcal{A}v(t)\\
        v(0)=\binom{x}{f},        \\
       \end{array}
\right.
\end{equation} 
where $v:t\rightarrow\binom{z(t)}{z_t}\in\mathcal{X}$ 
and $\mathcal{A}$ is an operator on $\mathcal{X}$ defined as
\begin{equation}\label{eqn defn abstract A}
\mathcal{A}:=\left(\begin{array}{cc} 
								A & \Psi \\ 
								0 & \frac{d}{d\sigma}	
						\end{array}\right),
\end{equation}
with domain
\begin{equation}\label{eqn defn abstract A domain}
D(\mathcal{A}):=\bigg\{\binom{x}{f}\in D(A)\times W^{1,2}(-\tau,0;X):\ f(0)=x\bigg\}.
\end{equation}
The operator $(\mathcal{A},D(\mathcal{A}))$ is closed and densely defined on $\mathcal{X}$ \cite[Lemma 3.6]{Batkai_Piazzera}. Let $\mathcal{A}=\mathcal{A}_0+\mathcal{A}_{\Psi}$, where 
\begin{equation}\label{eqn defn A_0}
\mathcal{A}_0:=\left(\begin{array}{cc} 
								A & 0 \\ 
								0 & \frac{d}{d\sigma}	
						\end{array}\right),\qquad D(\mathcal{A}_0)=D(\mathcal{A}),
\end{equation}
and
\begin{equation}\label{eqn defn  A_psi}
\mathcal{A}_{\Psi}:=\left(\begin{array}{cc} 
								0 & \Psi \\ 
								0 & 0	
						\end{array}\right)\in\mathcal{L}\big(X\times W^{1,2}(-\tau,0;X),\mathcal{X}\big).
\end{equation}
%

%
We will need the following for the Miyadera--Voigt Perturbation Theorem and a description of admissibility.

\begin{defn}\label{defn Sobolev tower}
Let $\beta\in\rho(A)$ and denote $(X_1,\norm{\cdot}_1):=(D(A),\norm{\cdot}_1)$ with $\norm{\cdot}_1:=\norm{(\beta I-A)x}\ (x\in D(A))$ .
 
Similarly, we set $\norm{x}_{-1}:=\norm{(\beta I-A)^{-1}x}\ (x\in X)$. Then the space $(X_{-1},\norm{\cdot}_{-1})$ denotes the completion of $X$ under the norm $\norm{\cdot}_{-1}$. For $t\geq0$ we define $T_{-1}(t)$ as the continuous extension of $T(t)$ to the space  $(X_{-1},\norm{\cdot}_{-1})$. 
\end{defn}
In the sequel, much of our reasoning is justified by the following Proposition, to which we do not refer directly but 
include here for the reader's convenience.
\begin{prop}\label{prop Hilbert rigged space}
With notation of Definition \ref{defn Sobolev tower} we have the following
\begin{itemize}
 \item[(i)] The spaces $(X_1,\norm{\cdot}_1)$ and $(X_{-1},\norm{\cdot}_{-1})$ are independent of the choice of $\beta\in\rho(A)$.
 \item[(ii)] $(T_1(t))_{t\geq0}$ is a strongly continuous semigroup on the Banach space \\$(X_1,\norm{\cdot}_1)$ and we have $\norm{T_1(t)}_1=\norm{T(t)}$ for all $t\geq0$.
 \item[(iii)] $(T_{-1}(t))_{t\geq0}$ is a strongly continuous semigroup on the Banach space \\$(X_{-1},\norm{\cdot}_{-1})$ and we have $\norm{T_{-1}(t)}_{-1}=\norm{T(t)}$ for all $t\geq0$.
\end{itemize}
\end{prop}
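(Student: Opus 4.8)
The plan is to prove all three parts from a single viewpoint: in each case I exhibit an isometric isomorphism that intertwines the semigroup in question with the original $(T(t))_{t\geq0}$. Since similarity through an isometric isomorphism preserves both the $C_0$-property and operator norms, the strong-continuity and norm-equality claims in (ii) and (iii) will follow immediately once the intertwining is set up; only part (i) requires a separate resolvent computation.

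For (i) I would argue by the resolvent identity. Given $\beta,\gamma\in\rho(A)$, for $x\in D(A)$ one writes $(\gamma I-A)x=(\beta I-A)x+(\gamma-\beta)x$ and uses $\norm{x}\leq\norm{(\beta I-A)^{-1}}\,\norm{(\beta I-A)x}$ to bound the $\gamma$-version of $\norm{\cdot}_1$ by a constant times the $\beta$-version; swapping $\beta$ and $\gamma$ yields the reverse inequality, so the two norms on $D(A)$ are equivalent and $X_1$ is unambiguous. For $X_{-1}$ I would instead invoke the resolvent identity $(\gamma I-A)^{-1}-(\beta I-A)^{-1}=(\beta-\gamma)(\gamma I-A)^{-1}(\beta I-A)^{-1}$ to obtain an analogous equivalence of the two $\norm{\cdot}_{-1}$-norms on $X$; since equivalent norms yield the same completion up to isomorphism, $X_{-1}$ is likewise independent of $\beta$.

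For (ii) the key observation is that, by the very definition $\norm{x}_1=\norm{(\beta I-A)x}$, the operator $\beta I-A$ is an isometric isomorphism from $(X_1,\norm{\cdot}_1)$ onto $(X,\norm{\cdot})$. Using the standard semigroup facts that $T(t)$ leaves $D(A)$ invariant and commutes with the resolvent (hence with $\beta I-A$ on $D(A)$), one checks that $T_1(t)=(\beta I-A)^{-1}T(t)(\beta I-A)$ on $X_1$, so that $T_1$ is similar to $T$ through this isometric isomorphism. Strong continuity of $(T_1(t))_{t\geq0}$ and the identity $\norm{T_1(t)}_1=\norm{T(t)}$ are then immediate. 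For (iii) I first extend each $T(t)$: since for $x\in X$ one has $\norm{T(t)x}_{-1}=\norm{(\beta I-A)^{-1}T(t)x}=\norm{T(t)(\beta I-A)^{-1}x}\leq\norm{T(t)}\,\norm{x}_{-1}$, the operator $T(t)$ is $\norm{\cdot}_{-1}$-bounded on the dense subspace $X\subset X_{-1}$ and extends uniquely to $T_{-1}(t)\in\mathcal{L}(X_{-1})$ with $\norm{T_{-1}(t)}_{-1}\leq\norm{T(t)}$. To upgrade this to equality and to get the $C_0$-property I would note that $(\beta I-A)^{-1}$, regarded as a map $(X,\norm{\cdot}_{-1})\to(X,\norm{\cdot})$, is by definition an isometry with dense range $D(A)$, hence extends to an isometric isomorphism $\widetilde R:X_{-1}\to X$; passing the commutation relation $(\beta I-A)^{-1}T(t)=T(t)(\beta I-A)^{-1}$ from the dense copy of $X$ to all of $X_{-1}$ gives $T_{-1}(t)=\widetilde R^{-1}T(t)\widetilde R$, again a similarity by an isometric isomorphism, whence $\norm{T_{-1}(t)}_{-1}=\norm{T(t)}$ and strong continuity follow.

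The main obstacle I anticipate is the bookkeeping in (iii). One must verify with care that $T(t)$ is genuinely $\norm{\cdot}_{-1}$-bounded so that the extension $T_{-1}(t)$ exists, that $\widetilde R$ really is a \emph{surjective} isometry onto $X$ (this uses density of $D(A)$ in $(X,\norm{\cdot})$), and that the intertwining identity, which is given a priori only on the dense copy of $X$ inside $X_{-1}$, survives passage to the completion. By contrast, parts (i) and (ii) reduce to routine resolvent-identity estimates and a direct conjugation computation once the intertwining viewpoint has been adopted.
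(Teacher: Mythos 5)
Your proposal is correct. The paper itself gives no proof of this proposition --- it simply defers to the cited references (Engel--Nagel, Ch.~II.5, and Tucsnak--Weiss, Ch.~2.10) --- and your argument is precisely the standard one found there: norm equivalence on $D(A)$ and on $X$ via the resolvent identity for (i), and conjugation by the isometric isomorphisms $\beta I-A\colon X_1\to X$ and $\widetilde R\colon X_{-1}\to X$ for (ii) and (iii), with the extension and intertwining steps in (iii) handled correctly by density of $X$ in $X_{-1}$ and of $D(A)$ in $X$.
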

See \cite[Chapter II.5]{Engel_Nagel} or \cite[Chapter 2.10]{Tucsnak_Weiss} for more details on these elements. A sufficient condition for $P\in\mathcal{L}(X_1,X)$ to be a perturbation of Miyadera-Voigt class, and hence implying that $A+P$ is a generator on $X$, takes the form of  \cite[Corollary III.3.16]{Engel_Nagel} 

\begin{prop}\label{prop Miyadera-Voigt sufficient condition of well posedness}
Let $(A,D(A))$ be the generator of a strongly continuous semigroup $\big(T(t)\big)_{t\geq0}$ on a Banach space $X$ and let ${P\in\mathcal{L}(X_1,X)}$ be a perturbation which satisfies
\begin{equation}\label{eqn condition on perturbation for well-posedness}
\int_{0}^{t_0}\norm{PT(r)x} \, dr\leq q\Vert x\Vert\qquad\forall x\in D(A)
\end{equation}
for some $t_0>0$ and $0\leq q<1$. Then the sum $A+P$ with domain $D(A+P):=D(A)$ generates a strongly continuous semigroup $(S(t))_{t\geq0}$ on $X$. 
\end{prop}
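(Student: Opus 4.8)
This is the Miyadera--Voigt perturbation theorem, and I would establish generation by exhibiting the perturbed semigroup as a Dyson--Phillips series. The plan is to set $S_0(t):=T(t)$, to define the higher iterates by the convolution recursion $S_{n+1}(t)x:=\int_0^t T(t-s)\,P S_n(s)x\,ds$, and to take $S(t):=\sum_{n\geq 0}S_n(t)$ as a candidate solving the variation-of-parameters equation $S(t)x=T(t)x+\int_0^t T(t-s)PS(s)x\,ds$. One then has to prove that this series converges to a $C_0$-semigroup and that its generator is $A+P$.

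The first genuine difficulty is that $P$ is only defined on $X_1=D(A)$, so the symbol $PS_n(s)x$ is not literally meaningful for general $x$. I would resolve this exactly as the hypothesis invites: the map $\Phi\colon x\mapsto\big(s\mapsto PT(s)x\big)$ sends $D(A)$ into $L^1(0,t_0;X)$ and, by \eqref{eqn condition on perturbation for well-posedness}, satisfies $\norm{\Phi x}_{L^1}\leq q\norm{x}$; hence $\Phi$ extends to a bounded operator $X\to L^1(0,t_0;X)$ of norm at most $q$. All the convolutions above are then interpreted through $\Phi$ rather than through a pointwise application of $P$, so that the recursion becomes an honest iteration of a bounded Volterra operator on $L^1(0,t_0;X)$. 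The central estimate, proved by induction on $n$ using a Fubini argument and the semigroup property, is $\int_0^{t_0}\norm{PS_n(s)x}\,ds\leq q^{\,n+1}\norm{x}$ for all $x\in X$; the inductive step reduces to $\int_s^{t_0}\norm{PT(t-s)y}\,dt\leq q\norm{y}$ with $y=PS_n(s)x$. Summing the geometric series (here $0\leq q<1$ is essential) gives $\sum_n\int_0^{t_0}\norm{PS_n(s)x}\,ds\leq \frac{q}{1-q}\norm{x}$.

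With $M:=\sup_{0\leq r\leq t_0}\norm{T(r)}$ the previous estimate yields $\norm{S_n(t)}\leq M q^{\,n}$ on $[0,t_0]$, so $S(t)=\sum_n S_n(t)$ converges in $\mathcal{L}(X)$ uniformly on $[0,t_0]$ and $\norm{S(t)}\leq M/(1-q)$ there. Strong continuity on $[0,t_0]$ is inherited from the strongly continuous partial sums precisely because the convergence is uniform; I would then propagate $S$ to all of $\mathbb{R}_+$ through the semigroup law $S(t+r)=S(t)S(r)$, which is checked directly from the Dyson--Phillips structure via Fubini, and finally extend the bound to $\norm{S(t)}\leq M'e^{\omega' t}$.

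It remains to identify the generator $G$ of $(S(t))_{t\geq0}$ with $A+P$. For large $\lambda$ the hypothesis gives $\norm{P(\lambda I-A)^{-1}}<1$: one bounds $\int_0^\infty e^{-\lambda s}\norm{PT(s)x}\,ds$ by splitting $[0,\infty)$ into translates of $[0,t_0]$ and using $\norm{T(t)}\leq Me^{\omega t}$. Consequently $\lambda I-A-P=(I-P(\lambda I-A)^{-1})(\lambda I-A)$ is a bijection from $D(A)$ onto $X$, so $\lambda\in\rho(A+P)$; and Laplace-transforming the variation-of-parameters identity shows the resolvent of $G$ coincides with $(\lambda I-A-P)^{-1}$, whence $G=A+P$ with $D(G)=D(A)$. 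I expect the main obstacle to be precisely the bookkeeping of the middle two paragraphs, namely giving rigorous meaning to $P$ applied to the iterates through the $L^1$-extension $\Phi$ and then pushing the estimates from the dense set $D(A)$ to all of $X$ and from $[0,t_0]$ to $[0,\infty)$; once these measurability and boundedness issues are under control, the generation statement and the identification $G=A+P$ follow from standard resolvent arguments.
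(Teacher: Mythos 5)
The paper does not actually prove this proposition: it is the Miyadera--Voigt perturbation theorem, quoted verbatim from the cited reference [Engel--Nagel, Corollary III.3.16] and used as a black box. Your Dyson--Phillips/Volterra-iteration argument --- extending $x\mapsto PT(\cdot)x$ to a bounded map $X\to L^1(0,t_0;X)$ to make sense of $P$ on the iterates, proving $\int_0^{t_0}\norm{PS_n(s)x}\,ds\le q^{n+1}\norm{x}$ by Fubini and induction, summing the geometric series, and identifying the generator via the Laplace transform of the variation-of-parameters identity --- is precisely the standard proof given in that reference, and it is correct in outline.
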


To describe the resolvent of $(\mathcal{A},D(\mathcal{A}))$, let us introduce the notation
\[
A_{0}:=\frac{d}{d\sigma},\qquad D(A_{0})=\{z\in W^{1,2}(-\tau,0;X):z(0)=0\},
\]
for the generator of the nilpotent left shift semigroup on $L^p(-\tau,0;X)$. For $s\in\mathbb{C}$ define $\epsilon_{s}:[-\tau,0]\rightarrow\mathbb{C}$, $\epsilon_{s}(\sigma):=e^{s \sigma}$. Define also $\Psi_{s}\in\mathcal{L}(D(A),X)$, $\Psi_{s}x:=\Psi(\epsilon_{s}(\cdot)x)$. Then \cite[Proposition 3.19]{Batkai_Piazzera} provides
\begin{prop}\label{prop abstract A resolvent operator}
For $s\in\mathbb{C}$ and for all $1\leq p<\infty$ we have 
\[
s\in\rho(\mathcal{A}) \quad \hbox{if and only if} \quad s\in\rho(A+\Psi_{s}).
\]
Moreover, for $s\in\rho(\mathcal{A})$ the resolvent operator $R(s,\mathcal{A})$ is given by
\begin{equation}\label{eqn relosvent of abstract A}
R(s,\mathcal{A})=\left(\begin{array}{ll} 
	R(s,A+\Psi_{s}) & R(s,A+\Psi_{s})\Psi R(s,A_0) \\ 
	\epsilon_{s}R(s,A+\Psi_{s}) & (\epsilon_{s}R(s,A+\Psi_{s})\Psi+I)	R(s,A_0)
								  \end{array}\right).
\end{equation}

\end{prop}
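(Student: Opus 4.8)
The plan is to solve the resolvent equation $(sI-\mathcal{A})\binom{x}{f}=\binom{y}{g}$ directly, for arbitrary $\binom{y}{g}\in\mathcal{X}$ and unknown $\binom{x}{f}\in D(\mathcal{A})$, and to read off both the spectral characterisation and the explicit formula from the solution. Writing the equation out via \eqref{eqn defn abstract A} splits it into the coupled system
\begin{equation*}
(sI-A)x-\Psi f=y,\qquad \Big(sI-\tfrac{d}{d\sigma}\Big)f=g,
\end{equation*}
to be solved subject to the domain constraints $x\in D(A)$, $f\in W^{1,2}(-\tau,0;X)$ and the coupling $f(0)=x$ coming from \eqref{eqn defn abstract A domain}.

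First I would treat the second equation as a first-order linear ODE in $\sigma$. Its homogeneous solutions are exactly the functions $\epsilon_s c$ with $c\in X$, while $R(s,A_0)g$ is the unique solution lying in $D(A_0)$, hence vanishing at $\sigma=0$; thus the general $W^{1,2}$-solution is $f=\epsilon_s c+R(s,A_0)g$. Evaluating at $0$ gives $f(0)=c$, so the coupling $f(0)=x$ forces
\begin{equation*}
f=\epsilon_s x+R(s,A_0)g,
\end{equation*}
and the second component is completely determined by $x$ and $g$.

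Substituting this $f$ into the first equation and using $\Psi(\epsilon_s x)=\Psi_s x$, I obtain
\begin{equation*}
\big(sI-(A+\Psi_s)\big)x=y+\Psi R(s,A_0)g.
\end{equation*}
This is the crux: the system is uniquely solvable for $x\in D(A)$ for every right-hand side precisely when $sI-(A+\Psi_s)$ is boundedly invertible, i.e.\ when $s\in\rho(A+\Psi_s)$. Granting this, $x=R(s,A+\Psi_s)\big(y+\Psi R(s,A_0)g\big)$, and feeding this back into the formula for $f$ reproduces the four entries of \eqref{eqn relosvent of abstract A} after grouping the $g$-terms as $(\epsilon_s R(s,A+\Psi_s)\Psi+I)R(s,A_0)$.

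The part needing most care is the equivalence $s\in\rho(\mathcal{A})\Leftrightarrow s\in\rho(A+\Psi_s)$ in both directions, together with the bookkeeping that the candidate operator is bounded on $\mathcal{X}$ and lands in $D(\mathcal{A})$. For the forward implication I would run the computation with $g=0$: any solution of $(sI-\mathcal{A})\binom{x}{f}=\binom{y}{0}$ has $f=\epsilon_s x$ and satisfies $(sI-(A+\Psi_s))x=y$, so surjectivity and injectivity of $sI-\mathcal{A}$ transfer to $sI-(A+\Psi_s)$, and closedness of $A+\Psi_s$ (via the closed graph theorem) upgrades the algebraic inverse to a bounded one. For the converse, the boundedness of each block must be checked: $R(s,A_0)$ maps $L^2(-\tau,0;X)$ into $D(A_0)\subset W^{1,2}(-\tau,0;X)$, so that $\Psi R(s,A_0)\in\mathcal{L}(L^2(-\tau,0;X),X)$ since $\Psi\in\mathcal{L}(W^{1,2}(-\tau,0;X),X)$; $R(s,A+\Psi_s)$ maps $X$ into $D(A)$; and $\epsilon_s\in\mathcal{L}(X,W^{1,2}(-\tau,0;X))$. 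One then verifies $x\in D(A)$, $f\in W^{1,2}(-\tau,0;X)$ and $f(0)=x$ directly from the formulas, confirming $\binom{x}{f}\in D(\mathcal{A})$, so that the block matrix is a genuine two-sided inverse of $sI-\mathcal{A}$. Since every step is an equivalence, this simultaneously yields the resolvent identity and the spectral characterisation.
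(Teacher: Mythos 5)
Your argument is correct, but note that the paper itself offers no proof of this statement: it is imported verbatim as Proposition~3.19 of the B\'atkai--Piazzera monograph, and your block-by-block solution of $(sI-\mathcal{A})\binom{x}{f}=\binom{y}{g}$ --- solving the shift equation first, pinning down the homogeneous part via the coupling $f(0)=x$, and reducing the first component to $(sI-(A+\Psi_s))x=y+\Psi R(s,A_0)g$ --- is essentially the standard proof given there. One small refinement: in the forward direction you do not actually need the closed graph theorem (which would require first establishing closedness of $A+\Psi_s$, a fact not obviously available since $\Psi_s$ is only graph-norm bounded). Your own computation with $g=0$ already shows that $y\mapsto \pi_1 R(s,\mathcal{A})\binom{y}{0}$, with $\pi_1$ the projection onto the first component, is an everywhere-defined bounded two-sided inverse of $sI-(A+\Psi_s)$, so boundedness comes for free from the boundedness of $R(s,\mathcal{A})$. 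Everything else, including the verification that the candidate block matrix maps into $D(\mathcal{A})$ and that $\Psi R(s,A_0)$ is bounded because $R(s,A_0)$ lands in $D(A_0)\subset W^{1,2}(-\tau,0;X)$, is exactly as it should be.
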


\subsection{The admissibility problem}
The basic object in the formulation of admissibility problem is a linear system and its mild solution
\begin{equation}\label{eqn basic object}
 \frac{d}{dt}x(t)=Ax(t)+Bu(t);\quad x(t)=T(t)x_0+\int_{0}^{t}T(t-s)Bu(s) \, ds,
\end{equation}
where $x:[0,\infty)\rightarrow X$, $u\in V$ where $V$ is a space of measurable functions from $[0,\infty)$ to $U$ and $B$ is a \textit{control operator}; $x_0\in X$ is an initial state.

In many practical examples the control operator $B$ is unbounded, hence \eqref{eqn basic object} is viewed on an extrapolation space $X_{-1}\supset X$ where $B\in\mathcal{L}(U,X_{-1})$. To ensure that the state $x(t)$ lies in $X$ it is sufficient that $\int_{0}^{t}T_{-1}(t-s)Bu(s) \, ds\in X$ for all inputs $u\in V$. Put differently, we have 

\begin{defn}\label{defn finite- and infinite-time admissibility}
 The control operator $B\in\mathcal{L}(U,X_{-1})$ is said to be \textit{finite-time admissible} for a semigroup $\big(T(t)\big)_{t\geq0}$ on a Hilbert space $X$ if for each $\tau>0$ there is a constant $c(\tau)$ such that the condition 

\begin{equation}\label{eqn defn finite-time admissibility by norm inequality}
 \Big\|\int_{0}^{\tau}T_{-1}(\tau-s)Bu(s) \, ds\Big\|_X\leq c(\tau)\norm{u}_V
\end{equation}

holds for all inputs $u$, and an \textit{infinite-time admissible} if the condition \eqref{eqn defn finite-time admissibility by norm inequality} holds for all $\tau>0$ with $c(\tau)$ uniformly bounded.
\end{defn}

In the sequel, we denote the restriction (extension) of $T(t)$ described in Definition~\ref{defn Sobolev tower} by the same symbol $T(t)$, since this is unlikely to lead to confusions.

\section{Diagonal non-autonomous delay systems}\label{sec:3}
We begin with an analysis of \eqref{eqn diagonal delay system introduction} in a more concrete setting. Consider 
the system 
\begin{equation}\label{eqn diagonal non-autonomous system}
\left\{\begin{array}{ll}
        \dot{z}(t)=Az(t-\tau)+Bu(t)\\
        z(0)=x,        \\
        z_0=f,		\\
       \end{array}
\right.
\end{equation}
where the state space is $X:=l^2(\CC)$, the control function $u\in L^2(0,\infty;\CC)$ and $(\lambda_k)_{k\in\NN}$ is a sequence in $\CC$ such that 
\begin{equation}\label{eqn sequence of eigenvalues of the generator}
\lambda_k\in\CC_{-}\quad\forall k\in\NN.
\end{equation}
The semigroup generator $(A,D(A))$ is defined by 
\begin{equation}\label{eqn defn diagonal generator}
(Az)_k:=\lambda_{k}z_{k},\quad D(A):=\bigg\{z\in l^2(\CC):\sum_{k\in\NN}(1+|\lambda_k|^2)|z_k|^2<\infty\bigg\}.
\end{equation}
%

As the space $X_1$ we take  $(D(A),\norm{\cdot}_{gr})$, where the graph norm is equivalent to
\[
\norm{z}_1^2=\sum_{k\in\NN}(1+|\lambda_k|^2)|z_k|^2.
\]
The adjoint generator $A^*$ is represented in the same way, with the sequence $(\bar{\lambda}_k)_{k\in\NN}$ in place of $(\lambda_k)_{k\in\NN}$. This gives $D(A^*)=D(A)$. The space $X_{-1}$ consists of all sequences $z=(z_k)_{k\in\NN}\in\CC^\NN$ for which
\[
\sum_{k\in\NN}\frac{|z_k|^2}{1+|\lambda_k|^2}<\infty,
\]
and the square root of the above series gives an equivalent norm on $X_{-1}$. The space  $X_{-1}$ is the same as  $X_{-1}^d$, where the latter one is the equivalent of $X_{-1}$ should the construction in Definition~\ref{defn Sobolev tower} be based on $A^*$ instead of $A$.

Note also that the operator $B\in\mathcal{L}(\CC,X_{-1})$ is represented by the sequence $(b_k)_{k\in\NN}\in\CC^\NN$, as $\mathcal{L}(\CC,X_{-1})$ can be identified with $X_{-1}$. 

The above is   the standard setting for diagonal systems; we refer the reader to \cite[Chapters 2.6 and 5.3]{Tucsnak_Weiss} for more details.

\begin{rem}
Although we restrict ourselves to contraction semigroups, this does not lead to loss of generality due to the semigroup rescaling property. That is when $A$ does not generate a contraction semigroup, we may replace it with a shifted version $A-\alpha I$ for a sufficiently large $\alpha>0$. This does not change the admissibility of control operator for the rescaled semigroup, but may change the infinite time admissibility.
\end{rem}

\subsection{Analysis of a single component}
Let us now focus on the $k$-th component of \eqref{eqn diagonal non-autonomous system}, that is 
\begin{equation}\label{eqn k-th component of diagonal non-autonomous system}
\left\{\begin{array}{ll}
        \dot{z}_k(t)=\lambda_{k}z_{k}(t-\tau)+b_ku(t)\\
        z_k(0)=x_k,        \\
        z_{0_k}=f_k,		\\
       \end{array}
\right.
\end{equation}
where $\lambda_k,b_k,x_k\in\CC$, $f_k:=\langle f,l_k\rangle_{L^2(-\tau,0;X)}l_k$ with $l_k$ being the $k$-th component of the standard orthonormal basis in $L^2(-\tau,0;X)$. 

For the sake of clarity of notation, let us now until the end of this subsection drop the subscript $k$ and rewrite \eqref{eqn k-th component of diagonal non-autonomous system} in the form
\begin{equation}\label{eqn k-th component Cauchy problem}
\left\{\begin{array}{ll}
        \dot{z}(t)=\Psi z_t+bu(t)\\
        z(0)=x,        \\
        z_{0}=f,		\\
       \end{array}
\right.
\end{equation}
where the delay operator $\Psi\in\mathcal{L}(W^{1,2}(-\tau,0;\CC),\CC)$ is defined as
\begin{equation}\label{eqn delay operator example}
\Psi(f):=\lambda f(-\tau)\qquad\forall f\in W^{1,2}(-\tau,0;\CC).
\end{equation}
Observe that, without the input function $bu\in L^2(0,\infty;\CC)$, system \eqref{eqn k-th component Cauchy problem} is a simplified form of \eqref{eqn delay autonomous diff eq}. As for such, we can apply the procedure described in the Preliminaries section and represent it as an abstract Cauchy problem of the form \eqref{eqn defn abstract Cauchy problem}. For that purpose note that 
\begin{equation}\label{eqn defn k-th component Cartesian product}
\mathcal{X}:=\CC\times L^2(-\tau,0;\CC) 
\end{equation}
with an inner product 
\begin{equation}\label{eqn defn k-th component inner product Cartesian product}
 \bigg\langle\binom{x}{f},\binom{y}{g}\bigg\rangle_{\mathcal{X}}:=x\bar{y}+\langle f,g\rangle_{L^2(-\tau,0;\CC)}\quad\forall \binom{x}{f},\binom{y}{g}\in\mathcal{X}.
\end{equation}
What follows is the non-autonomous Cauchy problem describing the dynamics of the $k$-th component

\begin{equation}\label{eqn k-th component abstract Cauchy problem}
\left\{\begin{array}{ll}
        \dot{v}(t)=\mathcal{A}v(t)+\mathcal{B}u(t)\\
        v(0)=\binom{x}{f},        \\
       \end{array}
\right.
\end{equation} 
where $v:t\rightarrow\binom{z(t)}{z_t}\in\mathcal{X}$ and $\mathcal{A}$ is an operator on $\mathcal{X}$ defined as
\begin{equation}\label{eqn defn k-th component abstract A}
\mathcal{A}:=\left(\begin{array}{cc} 
								0 & \Psi \\ 
								0 & \frac{d}{d\sigma}	
						\end{array}\right),
\end{equation}
with domain
\begin{equation}\label{eqn defn k-th component abstract A domain}
D(\mathcal{A}):=\bigg\{\binom{x}{f}\in \CC\times W^{1,2}(-\tau,0;\CC):\ f(0)=x\bigg\},
\end{equation}
and $\mathcal{B}:=\binom{b}{0}\in\mathcal{L}(\CC,\mathcal{X}_{-1})$.To state explicitly how the $\mathcal{X}_{-1}$ space looks like we use again \eqref{eqn defn k-th component abstract A} and \eqref{eqn defn k-th component abstract A domain}  as well as Proposition 3.1 from \cite{Partington_Zawiski_2018a}. As a  result,
\begin{equation}\label{eqn defn k-th component extended space}
\mathcal{X}_{-1}=\CC\times W^{-1,2}(-\tau,0;\CC),
\end{equation}
where $W^{-1,2}(-\tau,0;\CC)$ is the dual to $W_{0}^{1,2}(-\tau,0;\CC)$ with respect to the pivot space $L^2(-\tau,0;\CC)$. The generator $\mathcal{A}$ may again be represented as $\mathcal{A}=\mathcal{A}_0+\mathcal{A}_{\Psi}$, where 
\begin{equation}\label{eqn defn k-th component A_0}
\mathcal{A}_0:=\left(\begin{array}{cc} 
								0 & 0 \\ 
								0 & \frac{d}{d\sigma}	
						\end{array}\right),\quad D(\mathcal{A}_0)=D(\mathcal{A}),
\end{equation}
and
\begin{equation}\label{eqn defn k-th component A_psi}
\mathcal{A}_{\Psi}:=\left(\begin{array}{cc} 
								0 & \Psi \\ 
								0 & 0	
						\end{array}\right)\in\mathcal{L}\big(\CC\times W^{1,2}(-\tau,0;\CC),\mathcal{X}\big).
\end{equation}
We have the following
\begin{prop}\label{prop wellposedness of k-th component ACP}
The abstract Cauchy problem \eqref{eqn k-th component abstract Cauchy problem} is well-posed.
\end{prop}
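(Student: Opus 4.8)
The plan is to realise $\mathcal{A}$ as a Miyadera--Voigt perturbation of $\mathcal{A}_0$ and then to observe that the control operator $\mathcal{B}$ is in fact bounded into $\mathcal{X}$. First I would record that $(\mathcal{A}_0,D(\mathcal{A}_0))$ generates a $C_0$-semigroup $(T_0(t))_{t\geq0}$ on $\mathcal{X}$: since the first coordinate is governed by the zero operator it stays frozen, while the second is the transport equation $\partial_t g=\partial_\sigma g$ with boundary value $g(t,0)=z(t)=x$, so that
\[
\left(T_0(r)\binom{x}{f}\right)=\binom{x}{g_r},\qquad g_r(\sigma)=\begin{cases} x, & r+\sigma>0,\\ f(r+\sigma), & r+\sigma\leq0.\end{cases}
\]
This is the standard delay-semigroup of \cite[Ch.~VI.6]{Engel_Nagel} and \cite[Lemma~3.6]{Batkai_Piazzera}, specialised to the present scalar component.

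Next I would verify hypothesis \eqref{eqn condition on perturbation for well-posedness} of Proposition~\ref{prop Miyadera-Voigt sufficient condition of well posedness} with $A$, $T$, $P$, $X$, $X_1$ replaced by $\mathcal{A}_0$, $T_0$, $\mathcal{A}_{\Psi}$, $\mathcal{X}$, $\mathcal{X}_1:=(D(\mathcal{A}_0),\norm{\cdot}_{gr})$. Although $\mathcal{A}_{\Psi}\binom{x}{f}=\binom{\lambda f(-\tau)}{0}$ involves the trace $f(-\tau)$, which is not controlled by the $L^2$-norm of $f$, integrating in time repairs this. Indeed, for $0<t_0\leq\tau$ and $\binom{x}{f}\in D(\mathcal{A}_0)$ one has $g_r(-\tau)=f(r-\tau)$ for every $r\in[0,t_0]$, whence
\[
\int_0^{t_0}\Big\|\mathcal{A}_{\Psi}T_0(r)\binom{x}{f}\Big\|_{\mathcal{X}}\,dr=\abs{\lambda}\int_0^{t_0}\abs{f(r-\tau)}\,dr=\abs{\lambda}\int_{-\tau}^{t_0-\tau}\abs{f(s)}\,ds.
\]
A single application of the Cauchy--Schwarz inequality bounds the right-hand side by $\abs{\lambda}\,t_0^{1/2}\norm{f}_{L^2(-\tau,0;\CC)}\leq\abs{\lambda}\,t_0^{1/2}\big\|\binom{x}{f}\big\|_{\mathcal{X}}$. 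Choosing $t_0:=\min\{\tau,\tfrac{1}{2}\abs{\lambda}^{-2}\}$ (the case $\lambda=0$ being trivial) gives the required constant $q=\abs{\lambda}\,t_0^{1/2}<1$, so the Proposition applies and $\mathcal{A}=\mathcal{A}_0+\mathcal{A}_{\Psi}$ generates a $C_0$-semigroup $(\mathcal{T}(t))_{t\geq0}$ on $\mathcal{X}$ with $D(\mathcal{A})=D(\mathcal{A}_0)$.

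Finally, I would note that $\mathcal{B}=\binom{b}{0}$ maps $\CC$ into $\CC\times\{0\}\subset\mathcal{X}$, so that $\mathcal{B}\in\mathcal{L}(\CC,\mathcal{X})$ is bounded; hence the Duhamel term $\int_0^t\mathcal{T}(t-s)\mathcal{B}u(s)\,ds$ lies in $\mathcal{X}$ for every $u\in L^2(0,\infty;\CC)$ and, on each finite interval, depends continuously on $u$. Together with the generation property this yields well-posedness of \eqref{eqn k-th component abstract Cauchy problem}. The main obstacle is the Miyadera--Voigt estimate: the delay operator is only relatively bounded, and the crux is the identity $g_r(-\tau)=f(r-\tau)$ valid on $[0,\tau]$, which lets the time integral convert the uncontrollable pointwise trace into an honest $L^2$-norm.
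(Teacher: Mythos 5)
Your proposal is correct and follows the same route as the paper: realise $\mathcal{A}=\mathcal{A}_0+\mathcal{A}_\Psi$ and invoke the Miyadera--Voigt criterion of Proposition~\ref{prop Miyadera-Voigt sufficient condition of well posedness}. The only difference is that the paper delegates the verification of \eqref{eqn condition on perturbation for well-posedness} to a citation of B\'atkai--Piazzera (Chapter 3.3 and Example 3.28), whereas you carry out the estimate explicitly via the identity $g_r(-\tau)=f(r-\tau)$ on $[0,\tau]$ and Cauchy--Schwarz, which is exactly the computation behind that reference.
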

\begin{proof}
The delay operator $\Psi$ defined in \eqref{eqn delay operator example} is an example of a much wider class of delay operators, with which condition \eqref{eqn condition on perturbation for well-posedness} is satisfied and $(\mathcal{A},D(\mathcal{A}))$ in \eqref{eqn defn k-th component abstract A} remains a generator of a strongly continuous semigroup $(\mathcal{T}(t))_{t\geq0}$ on $\mathcal{X}$. See \cite[Chapter 3.3 and Example 3.28]{Batkai_Piazzera} for details.
\end{proof}

Due to Proposition~\ref{prop wellposedness of k-th component ACP} we can formally write the $\mathcal{X}_{-1}$-valued $k$-th component mild solution of \eqref{eqn k-th component abstract Cauchy problem}

\begin{equation}\label{eqn k-th component abstract Cauchy problem mild solution}
 v(t)=\mathcal{T}(t)v(0)+\int_{0}^{t}\mathcal{T}(t-s)\mathcal{B}u(s) \, ds,
\end{equation}
where $\mathcal{T}(t)\in\mathcal{L}(\mathcal{X}_{-1})$ and the control operator is again $\mathcal{B}=\binom{b}{0}\in\mathcal{L}(\CC,\mathcal{X}_{-1})$. The following Proposition gives information concerning spectral properties and the resolvent operator $R(s,\mathcal{A})$.
\begin{prop}\label{prop k-th component abstract A resolvent operator}
For $s\in\mathbb{C}$ and for all $1\leq p<\infty$ there is 
\begin{equation}\label{k-th component condition on resolvent sets}
s\in\rho(\mathcal{A})\text{ if and only if } s\in\rho(\Psi_{s}).
\end{equation}
Moreover, for $s\in\rho(\mathcal{A})$ the resolvent operator $R(s,\mathcal{A})$ is given by

	\begin{equation}\label{eqn k-th component resolvent of abstract A}
	R(s,\mathcal{A})=\left(\begin{array}{ll} 
		R(s,\Psi_{s}) & R(s,\Psi_{s})\Psi R(s,A_0) \\ 
		\epsilon_{s}R(s,\Psi_{s}) & (\epsilon_{s}R(s,\Psi_{s})\Psi+I)R(s,A_0)
									  \end{array}\right),
	\end{equation}
where $R(s,\Psi_s)\in\mathcal{L}(\CC)$,
\begin{equation}
 R(s,\Psi_s)=\frac{1}{s-\lambda \exp^{-s\tau}} \quad \forall s\in\CC_{\abs{\lambda}}
\end{equation}
and $R(s,A_0)\in\mathcal{L}(L^2(-\tau,0;\CC))$,
 \begin{equation}
R(s,A_0)f(r)=\int_{r}^{0}\exp^{s(r-t)}f(t) \,dt
 \quad r\in[-\tau,0]\quad \forall s\in\CC_{\abs{\lambda}}.
\end{equation}

\end{prop}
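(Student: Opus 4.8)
The plan is to recognise \eqref{eqn k-th component abstract Cauchy problem} (with the input term suppressed) as the special instance of the autonomous delay equation \eqref{eqn delay autonomous diff eq} obtained by taking the scalar state space $X=\CC$ together with the trivial generator $A=0$, the whole delay term $\lambda z(t-\tau)$ being absorbed into $\Psi z_t$ via \eqref{eqn delay operator example}. With this identification in place, Proposition~\ref{prop abstract A resolvent operator} applies verbatim, and since $A+\Psi_{s}=0+\Psi_{s}=\Psi_{s}$, it immediately delivers both the spectral equivalence \eqref{k-th component condition on resolvent sets} and the block form \eqref{eqn k-th component resolvent of abstract A} of $R(s,\mathcal{A})$, with every occurrence of $R(s,A+\Psi_{s})$ replaced by $R(s,\Psi_{s})$. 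The parameter $p$ plays no essential role: the equality of resolvent sets in Proposition~\ref{prop abstract A resolvent operator} holds for every $1\le p<\infty$, so the same is true in the present scalar reduction.

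It then remains to make the two ingredients $R(s,\Psi_{s})$ and $R(s,A_0)$ explicit. First I would compute $\Psi_{s}$ straight from its definition $\Psi_{s}x:=\Psi(\epsilon_{s}(\cdot)x)$: since $\Psi f=\lambda f(-\tau)$ and $\epsilon_{s}(-\tau)=e^{-s\tau}$, one gets $\Psi_{s}x=\lambda e^{-s\tau}x$, so that $\Psi_{s}\in\mathcal{L}(\CC)$ is simply multiplication by the scalar $\lambda e^{-s\tau}$. Consequently $(sI-\Psi_{s})^{-1}$ is the scalar $1/(s-\lambda e^{-s\tau})$, which is precisely the claimed formula. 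To see that this resolvent is defined on all of $\CC_{\abs{\lambda}}$, I would verify that $s\neq\lambda e^{-s\tau}$ whenever $\re s>\abs{\lambda}$: for such $s$ we have $\re s>\abs{\lambda}\ge 0$, whence $\abs{e^{-s\tau}}=e^{-\tau\re s}\le 1$ and therefore $\abs{\lambda e^{-s\tau}}\le\abs{\lambda}<\re s\le\abs{s}$; the moduli differ, so $s-\lambda e^{-s\tau}\neq 0$ and indeed $s\in\rho(\Psi_{s})$.

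Finally, for $R(s,A_0)$ I would solve the defining equation $(sI-A_0)g=f$ in $L^2(-\tau,0;\CC)$, that is the first-order linear ODE $sg-g'=f$ subject to the boundary condition $g(0)=0$ coming from $D(A_0)$. Multiplying by the integrating factor $e^{-s\sigma}$, integrating, and using $g(0)=0$ yields $g(r)=\int_{r}^{0}e^{s(r-t)}f(t)\,dt$, the asserted formula; one checks that $g(0)=0$ then holds automatically and that $g\in W^{1,2}(-\tau,0;\CC)$, so this $g$ does lie in $D(A_0)$. Since $A_0$ generates the nilpotent left-shift semigroup its spectrum is empty, so the formula is in fact valid for every $s\in\CC$, in particular on $\CC_{\abs{\lambda}}$.

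I do not anticipate a genuine obstacle here: the argument is a direct specialisation of Proposition~\ref{prop abstract A resolvent operator} followed by two elementary computations. The only point that requires any care is the modulus estimate placing $\CC_{\abs{\lambda}}$ inside $\rho(\Psi_{s})$, which guarantees that the scalar resolvent $1/(s-\lambda e^{-s\tau})$ is well defined on the stated half-plane.
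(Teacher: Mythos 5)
Your proposal is correct and follows essentially the same route as the paper: specialise Proposition~\ref{prop abstract A resolvent operator} with $A=0$ so that $A+\Psi_s=\Psi_s$, compute $\Psi_s x=\lambda e^{-s\tau}x$ explicitly, and obtain $R(s,A_0)$ by solving $sg-g'=f$ with variation of constants. The only cosmetic differences are that you locate $\CC_{\abs{\lambda}}$ inside $\rho(\Psi_s)$ by a direct modulus estimate where the paper invokes the general bound $\abs{s}\leq\norm{\Psi_s}$ for $s\in\sigma(\Psi_s)$, and that you correctly phrase the boundary condition as $g(0)=0$ (coming from $D(A_0)$) where the paper's text attributes it to $f$.
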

\begin{proof}
\begin{itemize}
	\item[1.] Condition \eqref{k-th component condition on resolvent sets} and the form of $R(s,\mathcal{A})$ in \eqref{eqn k-th component resolvent of abstract A} follow directly from Proposition~\ref{prop abstract A resolvent operator} and the form of $\mathcal{A}$ given in \eqref{eqn defn k-th component abstract A}.
	
	\item[2.] As is well known, for any Banach space $X$ and operator $A\in\mathcal{L}(X)$ the condition $s\in\sigma(A)$ implies $\abs{s}\leq\norm{A}$. 

\item[3.]
According to the definitions given before Proposition~\ref{prop abstract A resolvent operator} in this case there is $\Psi_s\in\mathcal{L}(\CC)$, $\Psi_{s}x:=\lambda\exp^{-s\tau}x$ and $\norm{\Psi_s}=\abs{\lambda}\exp^{-\re s\tau}$.

The equation $(\mu-\Psi_s )x=y$ has a unique solution $x \in \CC$ for each $y \in \CC$
if and only if $\mu \ne \lambda \exp^{-s\tau}$.
Thus $\sigma(\Psi_s)= \{\lambda \exp^{-s\tau} \}$, and so
\[
\{s \in \CC: s \in \sigma(\Psi_s) \}\subset 
\{s \in \CC: |s| \le |\lambda| \exp^{-\re s\tau} \} \subset
\{s \in \CC: \re s \le |\lambda| \}.
\]
Moreover, for $s\neq\lambda\exp^{-s\tau}$ there is
\[
R(s,\Psi_s)	=\frac{1}{s-\lambda\exp^{-s\tau}}.
\]
	
	
	\item[4.] To complete the description of $R(s,\mathcal{A})$ consider now $f\in L^2(-\tau,0;\CC)$,  $g\in W^{1,2}(-\tau,0;\CC)$ and a formal differential equation 
	\begin{equation}\label{eqn diff_eq for resolvent}
	(sI-A_0)g(r)=sg(r)-g'(r)=f(r)
	\end{equation}
	with an initial condition imposed on $f$ in the form 
 $f(0)=0$.
	Solving firstly a homogeneous equation and then using the method of   variation of constants one obtains
	\[
	g(r)=\int_{r}^{0}\exp^{s(r-t)}f(t) \, dt
\quad \forall r\in[-\tau,0]\ \forall s\in\CC_{\abs{\lambda}}
	\]
(see also \cite[p. 174, (6.6)]{kato}).

	Denote now $R_{s}f(r):=g(r)$, $r\in[-\tau,0]$. Then, for every $f\in L^2(-\tau,0;\CC)$ there is $(sI-A_0)R_{s}f(r)=f(r)$ and $R_s:L^2(-\tau,0;\CC)\rightarrow D(A_0)$ and $R_s\in\mathcal{L}(L^2(-\tau,0;\CC))$. Let now $f\in D(A_0)$. A simple check shows that $R_s[(sI-A_0)f(r)]=f(r)$, $r\in[-\tau,0]$. This means that $R_s$ is in fact a resolvent operator and we may write	
	\[
	R(s,A_0)f(r)=R_{s}f(r)=\int_{r}^{0}\exp^{s(r-t)}f(t) \, dt
\quad \forall f\in L^2(-\tau,0;\CC).
	\]
	
\end{itemize}
\end{proof}


Proposition~\ref{prop k-th component abstract A resolvent operator} gives the form of the resolvent $R(s,\Psi_s)$ and assures that it is analytic on $\CC_{\abs{\lambda}}$. The value of $\lambda$ is valid for the given mode only and at this stage $\abs{\lambda}\rightarrow\infty$ is allowed. Thus, as we will later require analyticity of $R(s,\Psi_s)$ in $\CC_{+}$, a different approach is needed. For that reason we turn our attention to the complex coefficient exponential polynomial $P:\CC\rightarrow\CC$,

\begin{equation}\label{eqn defn complex exponential polynomial}
P(s):=s-\lambda\exp^{-s\tau}, 
\end{equation}
where $\lambda\in\CC_{-}$ is a complex coefficient and $\tau>0$. 

The polynomial \eqref{eqn defn complex exponential polynomial} in a more general form $A(s)+B(s)\exp^{-s\tau}$ is known and widely studied in the theory of stability of finite dimensional dynamical systems - see e.g. \cite[Chapter 13]{Bellman_Cooke} or \cite[Chapter 6]{Partington_2004} and references therein. The main difficulty in our case, in comparison to the references given above, is that the coefficients are complex. Nevertheless, we can use a modified Walton--Marshall approach \cite{Walton_Marshall_1987} (or \cite[Proposition 6.2.3]{Partington_2004}), as the following Proposition shows.  

\begin{rem}
We take the principal argument of $\lambda$ to be $\Arg(\lambda)\in(-\pi,\pi]$.
\end{rem}

We shall require the following subset of the complex plane, depending on $\tau>0$:
\begin{equation}\label{eq:lambdatau}
\Lambda_\tau:=\bigg\{\lambda\in\CC_{-}:\Arg(\lambda)\in\big(-\pi,-\frac{\pi}{2}\big)\cup\big(\frac{\pi}{2},\pi\big],\ \abs{\lambda}<\frac{1}{\tau}\big(\abs{\Arg(\lambda)}-\frac{\pi}{2}\big)\bigg\}.
\end{equation}

\begin{prop}\label{prop stability of complex polynomial}For a given $\tau>0$ and $\lambda\in\CC_{-}$ the   condition $\lambda \in \Lambda_\tau$
 is sufficient for the polynomial $P$ defined in \eqref{eqn defn complex exponential polynomial} no to have right half-plane zeros (to be stable). In other words, all the solutions of the characteristic equation $P(s)=0$
belong to $\CC_{-}$.
\end{prop}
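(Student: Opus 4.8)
The plan is to run the Walton--Marshall continuation principle, treating the delay as a homotopy parameter $\theta$ that increases from $0$ to the given $\tau$, and tracking the zeros of $P_\theta(s):=s-\lambda\exp^{-s\theta}$. Since $P_\theta$ is of retarded type (the undelayed part $s$ carries the highest power of $s$), its zeros depend continuously on $\theta$, only finitely many lie in any right half-plane, and as $\theta\to0^+$ the additional zeros recede to $\re s\to-\infty$; here I would invoke the standard theory of such exponential polynomials (see \cite[Chapter 13]{Bellman_Cooke} and \cite[Chapter 6]{Partington_2004}). The upshot I need is that the number of zeros of $P_\theta$ in $\overbar{\CC_+}$ is constant on every $\theta$-interval free of imaginary-axis zeros, and that for all sufficiently small $\theta>0$ it equals $0$: indeed $P_0(s)=s-\lambda$ has the single zero $s=\lambda\in\CC_{-}$, and the extra zeros enter from $\re s=-\infty$.

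First I would locate every possible imaginary-axis zero. Substituting $s=i\omega$ into $P_\theta(s)=0$ gives $i\omega=\lambda\exp^{-i\omega\theta}$; taking moduli and using $\abs{\exp^{-i\omega\theta}}=1$ forces $\abs{\omega}=\abs{\lambda}$, so the only candidates are $\omega=\pm\abs{\lambda}$. Comparing arguments in $i\omega=\lambda\exp^{-i\omega\theta}$ then pins down, modulo $2\pi$, the delays at which a crossing can occur: for $\omega=+\abs{\lambda}$ one needs $\Arg(\lambda)-\abs{\lambda}\theta\equiv\frac{\pi}{2}\pmod{2\pi}$, and for $\omega=-\abs{\lambda}$ one needs $\Arg(\lambda)+\abs{\lambda}\theta\equiv-\frac{\pi}{2}\pmod{2\pi}$.

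Next I would extract the smallest positive crossing delay $\theta^*$ from these two conditions. This is the step where the complex coefficient genuinely matters: because $\lambda$ is not real, the branches $\omega=\pm\abs{\lambda}$ are no longer complex conjugate, so I must compare them case by case according to the principal argument. For $\Arg(\lambda)\in(\frac{\pi}{2},\pi]$ the phase $\Arg(\lambda)-\abs{\lambda}\theta$ decreases and first hits $\frac{\pi}{2}$, giving a crossing at $\theta^*=(\Arg(\lambda)-\frac{\pi}{2})/\abs{\lambda}$, and one checks the $\omega=-\abs{\lambda}$ branch only contributes at strictly larger $\theta$; for $\Arg(\lambda)\in(-\pi,-\frac{\pi}{2})$ the roles reverse and the first crossing comes from $\omega=-\abs{\lambda}$ at $\theta^*=(-\frac{\pi}{2}-\Arg(\lambda))/\abs{\lambda}$. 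In both cases this is $\theta^*=(\abs{\Arg(\lambda)}-\frac{\pi}{2})/\abs{\lambda}$. The careful accounting of which residue class modulo $2\pi$ the linearly sweeping phase meets first, so as to be sure no earlier crossing is missed, is the part I expect to be the main obstacle.

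Finally I would match $\theta^*$ against the hypothesis. The magnitude condition defining $\Lambda_\tau$ rearranges to $\abs{\lambda}\tau<\abs{\Arg(\lambda)}-\frac{\pi}{2}$, that is $\tau<\theta^*$, while the argument condition $\Arg(\lambda)\in(-\pi,-\frac{\pi}{2})\cup(\frac{\pi}{2},\pi]$ is precisely what makes $\theta^*$ positive. Thus $\lambda\in\Lambda_\tau$ asserts exactly that the fixed delay $\tau$ lies strictly below the first crossing $\theta^*$. Hence the interval $[0,\tau]$ contains no imaginary-axis zero, the count of zeros in $\overbar{\CC_+}$ stays at $0$ along the whole homotopy, and in particular $P=P_\tau$ has all its zeros in $\CC_{-}$, which is the assertion of the Proposition.
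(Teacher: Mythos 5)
Your proposal is correct and follows essentially the same route as the paper: the modified Walton--Marshall continuation argument, with crossing frequencies pinned to $\omega=\pm\abs{\lambda}$ by taking moduli, the crossing delays extracted by phase matching, and $\lambda\in\Lambda_\tau$ identified as exactly the condition $\tau<\theta^*=(\abs{\Arg(\lambda)}-\frac{\pi}{2})/\abs{\lambda}$. The only cosmetic differences are that you omit the paper's computation of $\sgn\re\frac{ds}{d\tau}$ at the crossing (which is indeed not needed for pure sufficiency, since you show no crossing occurs on $[0,\tau]$ at all) and you are somewhat more explicit about the modulo-$2\pi$ bookkeeping that selects the first crossing.
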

\begin{proof}
\begin{itemize}
  \item[1.] Consider initially the case when $\tau=0$. The polynomial $P$ has one root $s_0=\lambda$ and $s_0\in\CC_{-}$. 
  
  \item[2.] Using Rouch{\'e}'s theorem (see e.g. \cite[Theorem 12.2]{Bellman_Cooke}) one can show that the zeros move continuously with $\tau$. As they start in $\CC_{-}$ it remains to establish when they cross the imaginary axis.
  
  \item[3.] 
  At the crossing of the imaginary axis there is $s=i\omega$ for some $\omega\in\RR$ and the characteristic equation takes the form
  \begin{equation}\label{eqn char. eqn. 1}
  s-\lambda\exp^{-s\tau}=0.
  \end{equation}
	By point 2. we can treat \eqref{eqn char. eqn. 1} as an implicit function with $s=s(\tau)$ and check the direction in 	which zeros of it cross the imaginary axis  by analysing the $\sgn\re\frac{ds}{d\tau}$ at $s=i\omega$. By calculating 	the implicit function derivative we obtain
	\[
	\frac{ds}{d\tau}=-\frac{s^2}{1+s\tau}.
	\]
  As $s$ is purely imaginary and $\sgn\re z=\sgn\re z^{-1}$ we have  
  	\[
  	\sgn\re\frac{ds}{d\tau}>0
	\]
	and the zeros cross from the left to the right half-plane. What remains is to find for what $\tau$ this happens.
	
  \item[4.] Taking the complex conjugate of \eqref{eqn char. eqn. 1}  we obtain
  \[
  -s-\bar{\lambda}\exp^{s\tau}=0.
  \]
  Using both of the above equations to eliminate the exponential part we obtain $s^2=-\abs{\lambda}^2$, hence $s=\pm i\abs{\lambda}$. Choosing to work further with $s=i\abs{\lambda}$ and substituting it into \eqref{eqn char. eqn. 1}  we get
  \begin{equation}\label{eqn step_1}
  -i\frac{\lambda}{\abs{\lambda}}=\exp^{i\abs{\lambda}\tau}.
  \end{equation}
  
The corresponding equation for $s=-i|\lambda|$ is
\[
 - i\frac{\abs{\lambda}}{\lambda} =\exp^{i\abs{\lambda}\tau},
\]
which has the same form as \eqref{eqn step_1}, but replacing $\lambda$ by $\bar\lambda$.

  \item[5.] Let now $\lambda=\abs{\lambda}\exp^{i\Arg\lambda}$ where $\Arg(\lambda)\in(-\pi,-\frac{\pi}{2})\cup(\frac{\pi}{2},\pi]$. This gives
  \begin{equation}\label{eqn principal arument for step_1}
  \Arg\lambda-\frac{\pi}{2}\in(-\frac{3\pi}{2},-\pi)\cup(0,\frac{\pi}{2}]
  \end{equation}
  and from \eqref{eqn step_1} we have
  
  \begin{equation}\label{eqn lambda_tau condition}
   0<\abs{\lambda}\tau=\Arg\lambda-\frac{\pi}{2}\leq\frac{\pi}{2}.
  \end{equation}
  The above brings us to an observation that if there exist $\lambda\in\CC_-$ and $\tau>0$ such that $s=i\abs{\lambda}$ is a solution to \eqref{eqn char. eqn. 1} i.e. $s=\lambda\exp^{-s\tau}$ then $\frac{\pi}{2}<\Arg(\lambda)\leq\pi$ and \eqref{eqn lambda_tau condition} is satisfied.
  
  If we choose to work in point 4. with $s=-i\abs{\lambda}$ instead, then by symmetry we obtain that if there exist $\lambda\in\CC_-$ and $\tau>0$ such that $s=-i\abs{\lambda}$ is a solution to $s=\lambda\exp^{-s\tau}$ then $-\pi<\Arg(\lambda)<\frac{\pi}{2}$ and the equation $\abs{\lambda}\tau=-\Arg(\lambda)-\frac{\pi}{2}$ is satisfied.
  
  \item[6.] From the discussion in point 5. we draw two conclusions: 
  \begin{itemize}
    \item[(a)] given a diagonal system, with fixed $(\lambda_k)_{k\in\NN}$, the delay $\tau$ assuring that each mode is stable satisfies
    
    \begin{equation}\label{eqn delay condition on mode stability}
    \tau<\frac{1}{\abs{\lambda_k}}\bigg(\abs{\Arg\lambda_k}-\frac{\pi}{2}\bigg)\qquad\forall k\in\NN,
    \end{equation}
 
    \item[(b)] given a delay $\tau$, the distribution of $(\lambda_k)_{k\in\NN}$ for each mode to remain stable is
    
    \begin{equation}\label{eqn lambda_k condition on mode stability}
    \abs{\lambda_k}<\frac{1}{\tau}\bigg(\abs{\Arg\lambda_k}-\frac{\pi}{2}\bigg)\qquad\forall k\in\NN.
    \end{equation}
    
   \end{itemize}
  Clearly $(\lambda_k)_{k\in\NN}\subset\CC_-$.
\end{itemize}
\end{proof}

In geometrical terms Proposition \ref{prop stability of complex polynomial} states that the stability of $P$ is preserved for given $\tau$ provided that we choose the $\lambda$ coefficients from the interior of the set that resembles an ellipse with apsides in $0$ and $-\frac{\pi}{2\tau}$, and is elongated towards the latter one.

Referring now to Definition~\ref{defn finite- and infinite-time admissibility} and the mild solution of the $k$-th component \eqref{eqn k-th component abstract Cauchy problem mild solution} we introduce the forcing operator $\Phi_{\infty}\in\mathcal{L}(L^2(0,\infty;\CC),\mathcal{X}_{-1})$,

\begin{equation}\label{eqn defn forcing operator}
\Phi_{\infty}(u):=\int_{0}^{\infty}\mathcal{T}(t)\mathcal{B}u(t) \, dt,
\end{equation}
where
\[
\mathcal{T}(t)\mathcal{B}=\left(\begin{array}{cc} 
								\mathcal{T}_{11}(t) & \mathcal{T}_{12}(t) \\ 
								\mathcal{T}_{21}(t) & \mathcal{T}_{22}(t)	
						\end{array}\right)
						\left(\begin{array}{c} 
								b \\ 
								0	
						\end{array}\right)=
						\left(\begin{array}{c} 
								\mathcal{T}_{11}(t)b\\ 
								\mathcal{T}_{21}(t)b	
						\end{array}\right).
\]
Hence the forcing operator becomes
\begin{equation}\label{eqn k-th component forcing operator}
\Phi_{\infty}(u)=\left(\begin{array}{c} 
								\int_{0}^{\infty}\mathcal{T}_{11}(t)bu(t) \, dt\\ 
								\\
								\int_{0}^{\infty}\mathcal{T}_{21}(t)bu(t) \, dt	
						\end{array}\right)\in\mathcal{X}_{-1}.
\end{equation}

We can represent formally a similar product with the resolvent operator $R(s,\mathcal{A})$ from \eqref{eqn k-th component resolvent of abstract A}, namely
\begin{equation}\label{eqn k-th component resolvent times B}
R(s,\mathcal{A})\mathcal{B}=\left(\begin{array}{cc} 
									R_{11}(s) & R_{12}(s) \\ 
									R_{21}(s) & R_{22}(s)	
						\end{array}\right)
						\left(\begin{array}{c} 
								b \\ 
								0	
						\end{array}\right)=
						\frac{b}{s-\lambda \exp^{-s\tau}}
						\left(\begin{array}{c} 
								1\\ 
								\epsilon_s	
						\end{array}\right).					
\end{equation}
where the correspondence of sub-indices with elements of \eqref{eqn k-th component resolvent of abstract A} is obvious and will be used from now on to shorten the notation. 

The connection between the semigroup $\mathcal{T}(t)$ and the resolvent $R(s,\mathcal{A})$ is given by the Laplace transform (see e.g. \cite[Chapter 2.3]{Tucsnak_Weiss}) whenever the integral converges and 
\begin{equation}\label{eqn k-th component resolvent as Laplace transform}
R(s,\mathcal{A})\mathcal{B}=\int_{0}^{\infty}\exp^{-sr}\mathcal{T}(r)\mathcal{B} \, dr=b\left(\begin{array}{c} 
								\mathcal{L}(\mathcal{T}_{11})(s)\\ 
								\\
								\mathcal{L}(\mathcal{T}_{21})(s)	
						\end{array}\right)\in\mathcal{L}(\CC,\mathcal{X}_{-1}).
\end{equation}
We can now state the main theorem for the $k$-th component  of the delay system \eqref{eqn diagonal non-autonomous system}, namely
\begin{thm}\label{thm k-th component infty admissiblity}
Let for the given delay $\tau$ the eigenvalue $\lambda$ 
satisfy $\lambda \in \Lambda_\tau$.
Then the control operator $\mathcal{B}=\binom{b}{0}$ for the system \eqref{eqn k-th component abstract Cauchy problem} is infinite-time admissible for every $u\in L^2(0,\infty;\CC)$ and 
\[
\norm{\Phi_{\infty}u}_{\mathcal{X}}^2\leq\abs{b}^2(1+\tau)\frac{1}{\pi}\bigg(\frac{2-\delta}{\delta\abs{\lambda}}+\frac{2\delta}{(1-m^2)\abs{\lambda}}\bigg)\norm{u}_{L^2(0,\infty;\CC)}^2,
\]
for some $\delta,m\in(0,1)$, which can be given explicitly in terms of $\lambda$.
\end{thm}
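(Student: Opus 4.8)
The plan is to reduce the admissibility bound to a single scalar integral over the imaginary axis and estimate that integral using the geometry of $\Lambda_\tau$. Throughout write $g(s):=R(s,\Psi_s)=\frac{1}{s-\lambda\exp^{-s\tau}}$. Since $\lambda\in\Lambda_\tau$, Proposition~\ref{prop stability of complex polynomial} guarantees that $P(s)=s-\lambda\exp^{-s\tau}$ has no zeros in $\overline{\CC_+}$, so $g$ is analytic on $\CC_+$ and, once the integral below is shown to be finite, belongs to $H^2(\CC_+)$.

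First I would treat the two components of $\Phi_\infty u$ separately, using \eqref{eqn k-th component forcing operator}. Set $F(t):=\mathcal{T}_{11}(t)b$. By \eqref{eqn k-th component resolvent as Laplace transform} its Laplace transform is $bg$, so Theorem~\ref{thm Paley-Wiener} gives $F\in L^2(0,\infty;\CC)$ with $\norm{F}_{L^2}^2=\abs{b}^2\norm{g}_{H^2(\CC_+)}^2=\frac{\abs{b}^2}{2\pi}I$, where
\[
I:=\int_{-\infty}^{\infty}\frac{d\omega}{\abs{i\omega-\lambda\exp^{-i\omega\tau}}^2}.
\]
The Cauchy--Schwarz inequality then yields $\abs{\Phi_\infty^{(1)}u}^2\le\norm{F}_{L^2}^2\norm{u}_{L^2}^2=\frac{\abs{b}^2}{2\pi}I\,\norm{u}_{L^2}^2$ for the first ($\CC$-valued) component. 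For the second component, fix $\sigma\in[-\tau,0]$: by \eqref{eqn k-th component resolvent times B} and \eqref{eqn k-th component resolvent as Laplace transform} the function $t\mapsto\mathcal{T}_{21}(t)(\sigma)b$ has Laplace transform $bg(s)\exp^{s\sigma}$, and since $\abs{\exp^{i\omega\sigma}}=1$ on $i\RR$ its $H^2$-norm equals that of $bg$. The same estimate gives $\abs{\Phi_\infty^{(2)}u(\sigma)}^2\le\frac{\abs{b}^2}{2\pi}I\,\norm{u}_{L^2}^2$ for every $\sigma$, and integrating over $\sigma\in[-\tau,0]$ contributes a factor $\tau$. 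Adding the two components produces
\[
\norm{\Phi_\infty u}_{\mathcal{X}}^2\le(1+\tau)\frac{\abs{b}^2}{2\pi}I\,\norm{u}_{L^2}^2 ,
\]
which already exhibits the announced $(1+\tau)$ and $\abs{b}^2$ factors, shows $\Phi_\infty u\in\mathcal{X}$, and reduces everything to bounding $I$.

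The main work, and the step I expect to be the genuine obstacle, is the explicit estimate of $I$, where $\lambda\in\Lambda_\tau$ must be used quantitatively. The substitution $\omega=\abs{\lambda}\xi$ pulls out the factor $\abs{\lambda}^{-1}$, and, writing $\theta:=\Arg\lambda$ and $\psi(\xi):=\abs{\lambda}\tau\xi-\theta$, a direct expansion gives $\abs{i\omega-\lambda\exp^{-i\omega\tau}}^2=\abs{\lambda}^2\big((\xi+\sin\psi(\xi))^2+\cos^2\psi(\xi)\big)$, so that
\[
I=\frac{1}{\abs{\lambda}}\int_{-\infty}^{\infty}\frac{d\xi}{(\xi+\sin\psi(\xi))^2+\cos^2\psi(\xi)} .
\]
The integrand can degenerate only near $\abs{\xi}\approx1$, where simultaneously $\xi\approx-\sin\psi$ and $\cos\psi\approx0$; this is precisely the imaginary-axis crossing isolated in Proposition~\ref{prop stability of complex polynomial}. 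I would split $\RR$ into a resonance window of total length governed by $\delta$ around these critical points and its complement. On the complement the denominator is bounded below by a shifted parabola of type $(\abs{\xi}-1)^2$, giving a convergent tail integral controlled by $\tfrac{2-\delta}{\delta}$; inside the window the strict inequalities defining $\Lambda_\tau$ keep $\psi$ bounded away from $\pm\tfrac{\pi}{2}$, so that $\cos^2\psi\ge1-m^2$ for some $m=m(\lambda,\tau)\in(0,1)$, contributing $\tfrac{2\delta}{1-m^2}$. Collecting the pieces gives $I\le\frac{2}{\abs{\lambda}}\big(\frac{2-\delta}{\delta}+\frac{2\delta}{1-m^2}\big)$, and substituting into the previous display yields exactly the claimed inequality. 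The delicate point is making the window/tail split and the lower bound $1-m^2$ explicit in terms of $\theta$ and $\abs{\lambda}\tau$; the strictness $\abs{\lambda}<\tfrac{1}{\tau}(\abs{\theta}-\tfrac{\pi}{2})$ is exactly what forces $m<1$, since on $\partial\Lambda_\tau$ one would have $m=1$ and $I=\infty$, matching the loss of admissibility there.
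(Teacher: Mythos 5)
Your proposal follows essentially the same route as the paper: Paley--Wiener plus Cauchy--Schwarz reduces both components of $\Phi_\infty u$ to the scalar integral $\int_{-\infty}^{\infty}\abs{i\omega-\lambda\exp^{-i\omega\tau}}^{-2}d\omega$ (with the same $(1+\tau)$ bookkeeping), and your window/tail split with the reverse triangle inequality on the tail and the lower bound $\cos^2\psi\ge 1-m^2$ on the window reproduces the paper's estimate (which uses the law of cosines to reach the same constant $1-m^2$) exactly, including the final constants. The only cosmetic differences are that you treat the second component pointwise in $\sigma$ rather than by pairing against a test element of $W^{-1,2}(-\tau,0;\CC)$, and you leave implicit the reduction of the negative-frequency half-line to the $\bar\lambda$ case, both of which are harmless.
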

\begin{proof}
\begin{itemize}
\item[1.] Consider the standard inner product on $L^2(0,\infty;\CC)$, namely 
\[
\inner{f,g}_{L^2(0,\infty;\CC)}=\int_{0}^{\infty}f(t)\bar{g}(t) \, dt\qquad\forall f,g\in L^2(0,\infty;\CC).
\]

Using \eqref{eqn k-th component forcing operator} and \eqref{eqn defn k-th component extended space} we may write
\begin{equation}\label{eqn forcing operator 1st component as inner product}
\int_{0}^{\infty}\mathcal{T}_{11}(t)bu(t) \, dt=b\inner{\mathcal{T}_{11},\bar{u}}_{L^2(0,\infty;\CC)}
\end{equation}
assuming that $\mathcal{T}_{11}\in L^2(0,\infty;\CC)$. This assumption is equivalent, due to the Paley--Wiener Theorem \ref{thm Paley-Wiener}, to $\mathcal{L}(\mathcal{T}_{11})\in H^2(\CC_+)$, where the last inclusion holds. Indeed, using \eqref{eqn k-th component resolvent times B} and \eqref{eqn k-th component resolvent as Laplace transform} we see that $\mathcal{L}(\mathcal{T}_{11})(s)=bR_{11}(s)=\frac{b}{s-\lambda \exp^{-s\tau}}$. Now the assumption on $\lambda$ gives $R_{11}\in H^2(\CC_+)$ and the result follows.

\item[2.] The boundary trace $R_{11}^*=\mathcal{L}(\mathcal{T}_{11})^*\in L^2(i\RR)$ is given a.e. as
\[
\mathcal{L}(\mathcal{T}_{11})^*(i\omega)=\frac{1}{i\omega-\lambda \exp^{-i\omega\tau}}.
\]
Again by Theorem \ref{thm Paley-Wiener} and definition of the inner product on $H^2(\CC)_+$ in \eqref{eqn defn inner product on H2 space} we have
\begin{align*}
&b\inner{\mathcal{T}_{11},\bar{u}}_{L^2(0,\infty;\CC)}=b\inner{\mathcal{L}(\mathcal{T}_{11})^*,\mathcal{L}(\bar{u})^*}_{L^2(i\RR)}\\
&=\frac{b}{2\pi}\int_{-\infty}^{+\infty}\frac{1}{i\omega-\lambda \exp^{-i\omega\tau}}\overbar{\mathcal{L}(\bar{u})}^*(i\omega) \, d\omega
\end{align*}
The Cauchy--Schwarz inequality now gives 
\begin{align*}
&\abs{b}\bigg\vert\frac{1}{2\pi}\int_{-\infty}^{+\infty}\frac{1}{i\omega-\lambda \exp^{-i\omega\tau}}\overbar{\mathcal{L}(\bar{u})}^*(i\omega) \, d\omega\bigg\vert\\
&\leq\abs{b}\bigg(\frac{1}{2\pi}\int_{-\infty}^{+\infty} \Big\vert\frac{1}{i\omega-\lambda \exp^{-i\omega\tau}}\Big\vert^{2} \, d\omega\bigg)^{\frac{1}{2}} \bigg(\frac{1}{2\pi}\int_{-\infty}^{+\infty} \big\vert \overbar{\mathcal{L}(\bar{u})}^*(i\omega)\big\vert^2 \, d\omega\bigg)^{\frac{1}{2}}\\
&=\abs{b}\bigg(\frac{1}{2\pi}\int_{-\infty}^{+\infty} \Big\vert\frac{1}{i\omega-\lambda \exp^{-i\omega\tau}}\Big\vert^{2} \, d\omega\bigg)^{\frac{1}{2}} \norm{u}_{L^2(0,\infty;\CC)},
\end{align*}
Combining this result with point 1 we obtain
\begin{equation}\label{eqn forcing operator 1st component norm}
\bigg\vert\int_{0}^{\infty}\mathcal{T}_{11}(t)bu(t) \, dt\bigg\vert^2\leq\abs{b}^2\bigg(\frac{1}{2\pi}\int_{-\infty}^{+\infty} \Big\vert\frac{1}{i\omega-\lambda \exp^{-i\omega\tau}}\Big\vert^{2} \, d\omega\bigg) \norm{u}_{L^2(0,\infty;\CC)}^2.
\end{equation}

\item[3.] Consider now the second element of the forcing operator \eqref{eqn k-th component forcing operator}, namely
\[
\int_{0}^{\infty}\mathcal{T}_{21}(t)bu(t) \, dt\in W^{-1,2}(-\tau,0;\CC).
\]
To shorten the notation we write $W:=W^{-1,2}(-\tau,0;\CC)$. If we assume that $\mathcal{T}_{21}\in  L^2(0,\infty;W)$ then using the vector-valued version of Theorem \ref{thm Paley-Wiener} this is equivalent to $\mathcal{L}(\mathcal{T}_{21})\in H^2(\CC_+,W)$, but the last inclusion holds. Indeed, to show it notice that 
\[
\epsilon_s(\sigma):=\exp^{s\sigma},\quad \sigma\in[-\tau,0]
\]
is, as a function of $s$, analytic everywhere for every value of  $\sigma$, and follow exactly the reasoning in point 1.

\item[4.] We introduce an auxiliary function $\phi:[0,\infty)\rightarrow\CC$. For that purpose fix $\mathcal{T}_{21}\in L^2(0,\infty;W)$ and $x_0\in W$ and define
\[
\phi(t):=\inner{\mathcal{T}_{21}(t),x_0}_W.
\] 
The Cauchy--Schwarz inequality gives 
\[
\int_{0}^{\infty}\abs{\inner{\mathcal{T}_{21}(t),x_0}_W}^{2} \, dt\leq\int_{0}^{\infty}\norm{\mathcal{T}_{21}(t)}_{W}^{2} \, dt\norm{x_0}_{W}^{2}<\infty,
\]
hence $\phi\in L^2(0,\infty;\CC)$.

\item[5.] Consider now the following:
\[
b\int_{0}^{\infty}\phi(t)u(t) \, dt =b\int_{0}^{\infty}\inner{\mathcal{T}_{21}(t),x_0}_{W}u(t)dt 
 =b\bigg\langle\int_{0}^{\infty}\mathcal{T}_{21}(t)u(t) \, dt,x_0\bigg\rangle_{W}.
\]
We also have 
\begin{align*}
b\int_{0}^{\infty}\phi(t)u(t) \, dt=b\inner{\phi,\bar{u}}_{L^2(0,\infty;\CC)}=b\inner{\mathcal{L}(\phi)^*,\mathcal{L}(\bar{u})^*}_{L^2(i\RR)}.
\end{align*}
To obtain the boundary trace $\mathcal{L}(\phi)^*$ notice that
\begin{align*}
\mathcal{L}(\phi)(s)&=\int_{0}^{\infty}\exp^{-sr}\inner{\mathcal{T}_{21}(r),x_0}_{W} \, dr=\bigg\langle\int_{0}^{\infty}\exp^{-sr}\mathcal{T}_{21}(r) \, dr,x_0\bigg\rangle_{W}\\
&=\inner{\mathcal{L}(\mathcal{T}_{21})(s),x_0}_{W}=\inner{R_{21}(s),x_0}_{W}.
\end{align*}
Using now \eqref{eqn k-th component resolvent times B} yields the result
\[
\mathcal{L}(\phi)^*(i\omega)=\inner{R_{21}^*(i\omega),x_0}_{W}=\bigg\langle\frac{\epsilon_{i\omega}}{i\omega-\lambda\exp^{-i\omega\tau}},x_0\bigg\rangle_{W}.
\]
Finally, using the inner product on $L^2(i\RR)$ and the fact that $\overbar{\mathcal{L}(\bar{u})}^*(i\omega)\in\CC$ for every $\omega\in\RR$ we obtain
\[
\bigg\langle\int_{0}^{\infty}\mathcal{T}_{21}(t)u(t) \, dt,x_0\bigg\rangle_{W}=\bigg\langle\frac{1}{2\pi}\int_{-\infty}^{+\infty}R_{21}^*(i\omega)\overbar{\mathcal{L}(\bar{u})}^*(i\omega) \, d\omega,x_0\bigg\rangle_{W}
\]
and 
\begin{equation}\label{eqn forcing operator 2nd component}
\int_{0}^{\infty}\mathcal{T}_{21}(t)u(t) \, dt
=\frac{1}{2\pi}\int_{-\infty}^{+\infty}R_{21}^*(i\omega)\overbar{\mathcal{L}(\bar{u})}^*(i\omega) \, d\omega\in W.
\end{equation}

\item [6.] Using the norm on $L^2(-\tau,0;\CC)$ we have
\begin{align*}
\norm{R_{21}^*(i\omega)}_{L^2(-\tau,0;\CC)}^{2}&=\int_{-\tau}^{0}\bigg\vert \frac{\exp^{i\omega t}}{i\omega-\lambda \exp^{-i\omega\tau}}\bigg\vert^{2} \, dt
= \frac{1}{\vert i\omega-\lambda \exp^{-i\omega\tau}\vert^{2}}\int_{-\tau}^{0}\big\vert\exp^{i\omega t}\big\vert^{2} \, dt\\
&=\frac{\tau}{\vert i\omega-\lambda \exp^{-i\omega\tau}\vert^{2}}.
\end{align*}

The Cauchy--Schwarz inequality gives
\begin{align*}
&\abs{b}\bigg\| \frac{1}{2\pi}\int_{-\infty}^{+\infty}R_{21}^*(i\omega)\overbar{\mathcal{L}(\bar{u})}^*(i\omega)
\, d\omega \bigg\|_{L^2(-\tau,0;\CC)}\\
&\leq\abs{b} \frac{1}{2\pi}\int_{-\infty}^{+\infty}\norm{R_{21}^*(i\omega)}_{L^2(-\tau,0;\CC)}\abs{\overbar{\mathcal{L}(\bar{u})}^*(i\omega)} \, d\omega\\
&=\abs{b} \frac{1}{2\pi}\int_{-\infty}^{+\infty}\frac{\tau^{\frac{1}{2}}}{\vert i\omega-\lambda \exp^{-i\omega\tau}\vert}\abs{\overbar{\mathcal{L}(\bar{u})}^*(i\omega)} \, d\omega\\
&\leq\abs{b}\bigg(\frac{1}{2\pi}\int_{-\infty}^{+\infty} \Big(\frac{\tau^{\frac{1}{2}}}{\abs{i\omega-\lambda \exp^{-i\omega\tau}}}\Big)^{2} \, d\omega\bigg)^{\frac{1}{2}} \bigg(\frac{1}{2\pi}\int_{-\infty}^{+\infty} \big\vert \overbar{\mathcal{L}(\bar{u})}^*(i\omega)\big\vert^2 \, d\omega\bigg)^{\frac{1}{2}}\\
&=\abs{b}\bigg(\frac{1}{2\pi}\int_{-\infty}^{+\infty} \frac{\tau}{\abs{i\omega-\lambda \exp^{-i\omega\tau}}^{2}}
\, d\omega\bigg)^{\frac{1}{2}} \norm{u}_{L^2(0,\infty;\CC)}
\end{align*}
Combining this result with point 5 gives 
\begin{equation}\label{eqn forcing operator 2nd component norm}
\bigg\|\int_{0}^{\infty}\mathcal{T}_{21}(t)bu(t) \, dt\bigg\|_{L^2(-\tau,0;\CC)}
\leq\abs{b}\bigg(\frac{1}{2\pi}\int_{-\infty}^{+\infty} \frac{\tau}{\abs{i\omega-\lambda \exp^{-i\omega\tau}}^{2}}
\, d\omega\bigg)^{\frac{1}{2}} \norm{u}_{L^2(0,\infty;\CC)}
\end{equation}

\item[7.] Taking now  the norm $\norm{\cdot}_{\mathcal{X}}$ resulting from \eqref{eqn defn k-th component inner product Cartesian product} and using \eqref{eqn k-th component forcing operator}, \eqref{eqn forcing operator 1st component norm} and \eqref{eqn forcing operator 2nd component norm} we arrive at
\begin{align}
\norm{\Phi_{\infty}(u)}_{\mathcal{X}}^{2}&=\bigg\vert\int_{0}^{\infty}\mathcal{T}_{11}(t)bu(t) \, dt\bigg\vert^2+\bigg\|\int_{0}^{\infty}\mathcal{T}_{21}(t)bu(t) \, dt\bigg\|_{L^2(-\tau,0;\CC)}^{2}\nonumber\\
&\leq\abs{b}^{2}(1+\tau)\bigg(\frac{1}{2\pi}\int_{-\infty}^{+\infty} \frac{1}{\abs{i\omega-\lambda \exp^{-i\omega\tau}}^{2}} \, d\omega\bigg) \norm{u}_{L^2(0,\infty;\CC)}^{2}\label{eqn forcing operator partial bound}
\end{align}
The remaining part is to deal with the integral in the above estimation. Note, that trying to calculate it directly this problem is equivalent (up to a constant) to calculation of the integral 
\[
 \int_{-i\infty}^{+i\infty}\,  \frac{ds}{(s-\lambda \exp^{-s\tau})(-s-\bar{\lambda} \exp^{s\tau})}
\]
with $s$ and $\lambda$ as complex variables. This inevitably leads to the Lambert-W function related pole placement and complications with finding a suitable contour of integration. To avoid these difficulties we will content ourselves with estimation only.

\item[8.]  Define 
\[
 \int_{-\infty}^{+\infty}I(i\omega) \, d\omega:=\int_{-\infty}^{+\infty} \frac{1}{\abs{i\omega-\lambda \exp^{-i\omega\tau}}^2} \, d\omega.
\]
From the fact that for $\omega\in\RR$ there is
\[
\abs{i\omega-\lambda\exp^{-i\omega\tau}}=\abs{-i\omega-\bar{\lambda}\exp^{i\omega\tau}},
\]
the equalities 
\begin{align*}
\int_{0}^{+\infty} \frac{1}{\abs{i\omega-\lambda \exp^{-i\omega\tau}}^2} \, d\omega&=\int_{-\infty}^{0} \frac{1}{\abs{i\omega-\bar{\lambda} \exp^{-i\omega\tau}}^2} \, d\omega,\\
\int_{-\infty}^{0} \frac{1}{\abs{i\omega-\lambda \exp^{-i\omega\tau}}^2} \, d\omega&=\int_{0}^{+\infty} \frac{1}{\abs{i\omega-\bar{\lambda} \exp^{-i\omega\tau}}^2} \, d\omega
\end{align*}
follow. They give 
\[
\int_{-\infty}^{+\infty} \frac{1}{\abs{i\omega-\lambda \exp^{-i\omega\tau}}^2} \, d\omega=\int_{-\infty}^{+\infty} \frac{1}{\abs{i\omega-\bar{\lambda} \exp^{-i\omega\tau}}^2} \, d\omega,
\]
and therefore one can consider the sole case $\Arg(\lambda)\in(\frac{\pi}{2},\pi]$.
Using the reverse triangle inequality we may now write 
\begin{equation}\label{eqn k-th component norm integral decomposition}
\begin{split}
\int_{-\infty}^{+\infty}I(i\omega) \, d\omega &=
\int_{-\infty}^{0} \frac{1}{\abs{i\omega-\lambda \exp^{-i\omega\tau}}^2} \, d\omega+
\int_{0}^{+\infty} \frac{1}{\abs{i\omega-\lambda \exp^{-i\omega\tau}}^2} \, d\omega \\
      &=\int_{0}^{+\infty} \frac{1}{\abs{i\omega-\bar{\lambda} \exp^{-i\omega\tau}}^2} \, d\omega+
           \int_{0}^{+\infty} \frac{1}{\abs{i\omega-\lambda \exp^{-i\omega\tau}}^2} \, d\omega\\
  &\leq\int_{0}^{(1-\delta)\abs{\bar{\lambda}}} \, \frac{d\omega}{\big(\abs{\omega}-\abs{\bar{\lambda}}\big)^2}+
           \int_{(1+\delta)\abs{\bar{\lambda}}}^{+\infty}\, \frac{d\omega}{\big(\abs{\omega}-\abs{\bar{\lambda}}\big)^2}\\
      &+\int_{(1-\delta)\abs{\bar{\lambda}}}^{(1+\delta)\abs{\bar{\lambda}}}\frac{1}{\abs{i\omega-\bar{\lambda} \exp^{-i\omega\tau}}^2} \, d\omega\\
      &+\int_{0}^{(1-\delta)\abs{\lambda}} \, \frac{d\omega}{\big(\abs{\omega}-\abs{\lambda}\big)^2}+
           \int_{(1+\delta)\abs{\lambda}}^{+\infty}\, \frac{d\omega}{\big(\abs{\omega}-\abs{\lambda}\big)^2}\\
      &+\int_{(1-\delta)\abs{\lambda}}^{(1+\delta)\abs{\lambda}} \frac{1}{\abs{i\omega-\lambda \exp^{-i\omega\tau}}^2} \, d\omega\\
    &=2\int_{0}^{(1-\delta)\abs{\lambda}} \, \frac{d\omega}{\big(\omega-\abs{\lambda}\big)^2}+
         2\int_{(1+\delta)\abs{\lambda}}^{+\infty}\, \frac{d\omega}{\big(\omega-\abs{\lambda}\big)^2}\\
      &+\int_{(1-\delta)\abs{\bar{\lambda}}}^{(1+\delta)\abs{\bar{\lambda}}}\frac{1}{\abs{i\omega-\bar{\lambda} \exp^{-i\omega\tau}}^2} \, d\omega+
           \int_{(1-\delta)\abs{\lambda}}^{(1+\delta)\abs{\lambda}} \frac{1}{\abs{i\omega-\lambda \exp^{-i\omega\tau}}^2} \, d\omega
\end{split}
\end{equation}
for any $\delta\in(0,1)$. 
The first and second integral on the right hand side give
\[
 \int_{0}^{(1-\delta)\abs{\lambda}} \, \frac{d\omega}{\big(\omega-\abs{\lambda}\big)^2}=\frac{1-\delta}{\delta\abs{\lambda}},
\]
and
\[
 \int_{(1+\delta)\abs{\lambda}}^{+\infty} \, \frac{d\omega}{\big(\omega-\abs{\lambda}\big)^2}=\frac{1}{\delta\abs{\lambda}}.
\]
Taking into account the comments above we will firstly find the upper bound for the last integral in \eqref{eqn k-th component norm integral decomposition}.

\item[9.]
Hence, using the assumption let $\lambda=\abs{\lambda}\exp^{i\Arg(\lambda)}$, where $\Arg(\lambda)=\frac{\pi}{2}+\varepsilon_\lambda$, $\varepsilon_\lambda\in(0,\frac{\pi}{2}]$ and
\[
 0<\abs{\lambda}\tau<\Arg(\lambda)-\frac{\pi}{2}=\varepsilon_\lambda\leq\frac{\pi}{2}.
\]
Fix now $\delta\in(0,1)$ such that 
\begin{equation}\label{eqn delta condition}
(1-\delta)\abs{\lambda}\tau<\abs{\lambda}\tau<(1+\delta)\abs{\lambda}\tau<\varepsilon_\lambda.
\end{equation}
Let $\eta\in[1-\delta,1+\delta]$ and consider $\omega=\eta\abs{\lambda}$. For such $\omega$ we have
\begin{equation}\label{eqn denominator of I(iw)}
\begin{split}
 \big\vert i\omega-\lambda \exp^{-i\omega\tau}\big\vert^2&=\abs{\lambda}^2\big\vert\eta\exp^{i\frac{\pi}{2}}-\exp^{i(\Arg(\lambda)-\eta\abs{\lambda}\tau)}\big\vert^2\\
 &=\abs{\lambda}^2\abs{k_\eta-q_\eta}^2=\abs{\lambda}^2v_{\eta}^2
\end{split}
\end{equation}
with the obvious definition of $k_\eta$ and $q_\eta$ vectors and $v_\eta:=\abs{k_\eta-q_\eta}$. Due to \eqref{eqn delta condition} and the definition of $\eta$ we have
\[
 \Arg(\lambda)-\eta\abs{\lambda}\tau=\frac{\pi}{2}+\varepsilon_\lambda-\eta\abs{\lambda}\tau>\frac{\pi}{2}
\]
and $\abs{k_\eta-q_\eta}^2>0$ for every $\eta\in[1-\delta,1+\delta]$. Define $\varepsilon(\eta)$ as the angle between $k_\eta$ and $q_\eta$, that is
\[
 \varepsilon(\eta):=\varepsilon_\lambda-\eta\abs{\lambda}\tau,
\]
which is a linear function of $\eta\in[1-\delta,1+\delta]$ with values 
\begin{equation}\label{eqn angle between vectors on the 1-circle}
\varepsilon(\eta)\in\big(\varepsilon_\lambda-(1+\delta)\abs{\lambda}\tau,\varepsilon_\lambda-(1-\delta)\abs{\lambda}\tau\big)\subset (0,\frac{\pi}{2}).
\end{equation}

The law of cosines in the $\eta$-dependent triangle $(k_\eta,q_\eta,v_\eta)$ gives
\begin{equation}\label{eqn difference of vectors on the 1-circle}
  v_{\eta}^2=1+\eta^2-2\eta\cos\big(\varepsilon_\lambda-\eta\abs{\lambda}\tau\big).
\end{equation}
The strict monotonicity of the cosine function on $(0,\frac{\pi}{2})$ and \eqref{eqn angle between vectors on the 1-circle} give
\begin{equation}\label{eqn defn max of cosine}
m:=\max\big\{\cos\big(\varepsilon_\lambda-\eta\abs{\lambda}\tau\big):\eta\in[1-\delta,1+\delta]\big\}=\cos\big(\varepsilon_\lambda-(1+\delta)\abs{\lambda}\tau\big)
\end{equation}

and $m\in(0,1)$. Hence, for every $\eta\in[1-\delta,1+\delta]$ we have
\begin{equation}\label{eqn difference lower bound}
v_{\eta}^2\geq 1+\eta^2-2\eta m\geq 1-m^2>0,
\end{equation}

as $\eta^2-2\eta m+m^2=(\eta-m)^2\geq0$ so that $\eta^2-2\eta m\geq -m^2$.

Now \eqref{eqn denominator of I(iw)} and \eqref{eqn difference lower bound} give
\[
\frac{1}{\big\vert i\omega-\lambda \exp^{-i\omega\tau}\big\vert^2}\leq\frac{1}{(1-m^2)\abs{\lambda}^2}\qquad\forall\omega\in\big[(1-\delta)\abs{\lambda},(1+\delta)\abs{\lambda}\big]
\]
and, in consequence, lead to a finite upper bound of the last integral in \eqref{eqn k-th component norm integral decomposition}, that is
\begin{equation}\label{eqn  k-th component norm integral decomposition 2nd bound}
\int_{(1-\delta)\abs{\lambda}}^{(1+\delta)\abs{\lambda}} \frac{1}{\abs{i\omega-\lambda \exp^{-i\omega\tau}}^2} \, d\omega
\leq\frac{2\delta}{(1-m^2)\abs{\lambda}}.
\end{equation}
Noting that $\Arg(\bar{\lambda})=-\Arg(\lambda)$ and using the same geometrical approach one can show that for the third integral in \eqref{eqn k-th component norm integral decomposition} the upper bound of  \eqref{eqn  k-th component norm integral decomposition 2nd bound} also holds.

\item[10.] Taking together \eqref{eqn forcing operator partial bound}, \eqref{eqn k-th component norm integral decomposition} and \eqref{eqn  k-th component norm integral decomposition 2nd bound} we arrive at
\[
\norm{\Phi_{\infty}(u)}_{\mathcal{X}}^{2}\leq\abs{b}^{2}(1+\tau)\frac{1}{\pi}\bigg(\frac{2-\delta}{\delta\abs{\lambda}}+\frac{2\delta}{(1-m^2)\abs{\lambda}}\bigg)\norm{u}_{L^2(0,\infty;\CC)}^{2}
\]

\end{itemize}
\end{proof}

\subsection{Analysis of the whole system}

Let us return now to the diagonal non-autonomous system \eqref{eqn diagonal non-autonomous system} with state space $X=l^2(\CC)$ and to denoting its $k$-th component with the subscript. As shown in the previous subsection, Proposition~\ref{prop wellposedness of k-th component ACP}  states that the system \eqref{eqn k-th component of diagonal non-autonomous system} describing the $k$-th component is well-posed and its mild solution is given by \eqref{eqn k-th component abstract Cauchy problem mild solution}, that is  $v_{k}:[0,\infty)\rightarrow\mathcal{X}$,

\begin{equation}\label{eqn k-th component abstract Cauchy problem mild solution with indices}
v_{k}(t)=\binom{z_{k}(t)}{z_{t_k}}=\mathcal{T}_{k}(t)v_{k}(0)+\int_{0}^{t}\mathcal{T}_{k}(t-s)\mathcal{B}_{k}u(s) \, ds.
\end{equation}
Given the structure of the Hilbert space $\mathcal{X}$ in \eqref{eqn defn of inner product on Cartesian product} the mild solution \eqref{eqn k-th component abstract Cauchy problem mild solution with indices} has values in the subspace of $\mathcal{X}$ spanned by the $k$-th element of its basis. Hence, defining $v:[0,\infty)\rightarrow\mathcal{X}$,

\begin{equation}\label{eqn abstract Cauchy problem mild solution with indices}
v(t):=\sum_{k\in\NN}v_{k}(t),
\end{equation}
we obtain a unique mild solution of \eqref{eqn diagonal non-autonomous system} and this system is well-posed. 
Using \eqref{eqn abstract Cauchy problem mild solution with indices} and \eqref{eqn defn of inner product on Cartesian product} we have 
\begin{equation}\label{eqn abstract Cauchy problem mild solution norm in X}
\begin{split}
\norm{v(t)}_{\mathcal{X}}^2&=\bigg\|\binom{z(t)}{z_{t}}\bigg\|_{\mathcal{X}}^2=\norm{z(t)}_{l^2}^2+\norm{z_t}_{L^2(-\tau,0;l^2)}^2\\
			   &=\sum_{k\in\NN}\abs{z_k(t)}^2+\sum_{k\in\NN}\abs{\inner{z_t,l_k}_{L^2(-\tau,0;l^2)}}^2\\
			   &=\sum_{k\in\NN}\bigg(\abs{z_k(t)}^2+\norm{z_{t_k}}_{L^2(-\tau,0;\CC)}^2\bigg)\\
			   &=\sum_{k\in\NN}\norm{v_k(t)}_{\mathcal{X}}^2,
\end{split}
\end{equation}
where we used again \eqref{eqn defn k-th component Cartesian product} and notation from \eqref{eqn k-th component of diagonal non-autonomous system}. We can formally write the mild solution \eqref{eqn abstract Cauchy problem mild solution with indices} as a function $v:[0,\infty)\rightarrow\mathcal{X}_{-1}$,

\begin{equation}\label{eqn whole abstract Cauchy problem mild solution formal}
v(t)=\mathcal{T}(t)v(0)+\int_{0}^{t}\mathcal{T}(t-s)\mathcal{B}u(s) \, ds.
\end{equation}
where $\mathcal{X}_{-1}=X_{-1}\times W^{-1,2}(-\tau,0;X)$ and  the control operator $\mathcal{B}\in\mathcal{L}(\CC,\mathcal{X}_{-1})$ is given by  $\mathcal{B}=\binom{(b_k)_{k\in\NN}}{0}$. We may now state the main theorem of this article.

\begin{thm}\label{thm whole system infty admissibility}
 Let for the given delay $\tau$ every element of the sequence $(\lambda_k)_{k\in\NN}$ satisfy $\lambda_k \in \Lambda_\tau$, where $\Lambda_\tau$ was defined in \eqref{eq:lambdatau}.
Then the control operator $\mathcal{B}\in\mathcal{L}(\CC,\mathcal{X}_{-1})$ given by $\mathcal{B}=\binom{(b_k)_{k\in\NN}}{0}$ is infinite-time admissible   if the sequence $(C_k)_{k\in\NN}\in l^1$, where
\[
 C_k:=\abs{b_k}^2(1+\tau)\frac{1}{\pi}\bigg(\frac{2-\delta_k}{\delta_k\abs{\lambda_k}}+\frac{2\delta_k}{(1-m_k^2)\abs{\lambda_k}}\bigg)
\]
and $\delta_k,m_k$ fulfil the conditions \eqref{eqn delta condition} and \eqref{eqn defn max of cosine}.

\end{thm}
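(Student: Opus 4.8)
The plan is to bootstrap from the single-mode estimate of Theorem~\ref{thm k-th component infty admissiblity} using the orthogonal splitting of $\mathcal{X}$ recorded in \eqref{eqn abstract Cauchy problem mild solution norm in X}. First I would note that the forcing operator acts diagonally: since the \emph{same} scalar input $u$ drives every mode through $b_k u$, and the full mild solution decomposes as $v(t)=\sum_{k\in\NN}v_k(t)$ with each $v_k(t)$ living in the subspace of $\mathcal{X}$ spanned by the $k$-th basis vector, the infinite-time forcing operator of \eqref{eqn defn forcing operator} for the whole system satisfies $\Phi_\infty u=\sum_{k\in\NN}\Phi_\infty^{(k)}u$, where $\Phi_\infty^{(k)}$ is the $k$-th component forcing operator.

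Next I would invoke the Pythagorean identity \eqref{eqn abstract Cauchy problem mild solution norm in X}, which tells us these components are mutually orthogonal in $\mathcal{X}$, so that
\[
\norm{\Phi_\infty u}_{\mathcal{X}}^2=\sum_{k\in\NN}\norm{\Phi_\infty^{(k)}u}_{\mathcal{X}}^2.
\]
Into each summand I would insert the bound of Theorem~\ref{thm k-th component infty admissiblity}, namely $\norm{\Phi_\infty^{(k)}u}_{\mathcal{X}}^2\leq C_k\norm{u}_{L^2(0,\infty;\CC)}^2$ with $C_k$ precisely the constant in the statement (built from the mode-dependent parameters $\delta_k,m_k$). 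This application is legitimate because the hypothesis $\lambda_k\in\Lambda_\tau$ for every $k$ is exactly what Theorem~\ref{thm k-th component infty admissiblity} demands. Summing then yields
\[
\norm{\Phi_\infty u}_{\mathcal{X}}^2\leq\Big(\sum_{k\in\NN}C_k\Big)\norm{u}_{L^2(0,\infty;\CC)}^2.
\]

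Finally, the assumption $(C_k)_{k\in\NN}\in l^1$ makes the constant $\sum_k C_k$ finite, so $\Phi_\infty$ extends to a bounded operator from $L^2(0,\infty;\CC)$ into $\mathcal{X}$ with a norm bound independent of the input. In particular the image lands in the genuine state space $\mathcal{X}=X\times L^2(-\tau,0;X)$ rather than merely in $\mathcal{X}_{-1}$, which is exactly infinite-time admissibility in the sense of Definition~\ref{defn finite- and infinite-time admissibility}. The one point deserving care---and which I expect to be the main, though modest, obstacle---is justifying that the series $\sum_k\Phi_\infty^{(k)}u$ actually converges in $\mathcal{X}$ and that its norm may be evaluated term-by-term; this is supplied by the orthogonality in \eqref{eqn abstract Cauchy problem mild solution norm in X} together with the $l^1$ summability of $(C_k)$, which forces the partial sums to be Cauchy in $\mathcal{X}$. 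The remainder is a direct assembly of the component estimate with the Pythagorean identity.
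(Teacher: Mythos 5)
Your proposal is correct and follows essentially the same route as the paper: decompose $\Phi_\infty u$ into the mode-wise forcing operators, use the orthogonal (Pythagorean) splitting of $\mathcal{X}$ from \eqref{eqn abstract Cauchy problem mild solution norm in X}, insert the single-mode bound of Theorem~\ref{thm k-th component infty admissiblity}, and sum using the $\ell^1$ hypothesis. Your extra remark on justifying term-by-term convergence is a sensible elaboration of a step the paper leaves implicit, but it is not a different argument.
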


\begin{proof}
 Define the forcing operator for \eqref{eqn whole abstract Cauchy problem mild solution formal} as $\Phi_\infty:L^2(0,\infty)\rightarrow\mathcal{X}_{-1}$,
\[
 \Phi_{\infty}(u):=\int_{0}^{\infty}\mathcal{T}(t)\mathcal{B}u(t) \, dt.
\]
From \eqref{eqn abstract Cauchy problem mild solution with indices} it can be represented as
\[
 \Phi_{\infty}(u)=\sum_{k\in\NN}\Phi_{\infty_k}(u),
\]
where $\Phi_{\infty_k}(u)$ is given by \eqref{eqn defn forcing operator} for every $k\in\NN$. Then, similarly as in \eqref{eqn abstract Cauchy problem mild solution norm in X} and using the assumption we see that 
\[
 \norm{\Phi_{\infty}(u)}_{\mathcal{X}}^2=\sum_{k\in\NN}\norm{\Phi_{\infty_k}(u)}_{\mathcal{X}}^2\leq\bigg(\sum_{k\in\NN}\abs{C_k}\bigg)\norm{u}_{L^2(0,\infty;\CC)}^2<\infty
\]
\end{proof}
\section{Examples}\label{sec:4}

In construction of an appropriate example fulfilling assumptions of Theorem~\ref{thm whole system infty admissibility} the biggest difficulty lies in the condition imposed on the eigenvalues $(\lambda_k)_{k\in\NN}$ of the generator $(A,D(A))$, defined  in \eqref{eqn defn diagonal generator}. Apart from a somewhat artificial case where one could simply define $\lambda_k:=(-\frac{\pi}{2\tau}+\varepsilon)\frac{1}{k}$ for some fixed $\tau,\varepsilon>0$ and all $k\in\NN$, we provide two additional, more illustrative examples.

\subsection{Multiplication operator}\label{Example: multiplication operator}
Consider the multiplication operator on the space $L^2(\Omega,\mu)$, with a $\sigma$-finite measure space $(\Omega,\mathfrak{M},\mu)$, as shown and described in detail in \cite[Section I.4.b]{Engel_Nagel}. More precisely, for a measurable function  (called \textit{symbol}) $q:\Omega\rightarrow\CC$, we call the set 
\[
 q_{ess}(\Omega):=\Big\{\lambda\in\CC:\mu\big(\{s\in\Omega:\abs{q(s)-\lambda}<\varepsilon\}\big)\neq0\hbox{ for all }\varepsilon>0\Big\}
\]
the \textit{essential range} of $q$ and define the associated multiplication operator $M_q$ as 
\[
 M_qf:=q\cdot f,\quad D(M_q):=\big\{f\in L^2(\Omega,\mu):q\cdot f\in L^2(\Omega,\mu)\big\}.
\]
The importance of this example lies in the fact that each normal operator on a Hilbert space is unitarily equivalent to a multiplication operator on some $L^2$ space.

From the perspective of Theorem~\ref{thm whole system infty admissibility} the multiplication operator has a useful property, namely the spectrum of $M_q$ is the essential range of $q$, that is 
\[
\sigma(M_q)=q_{ess}(\Omega). 
\]
Hence, by choosing a suitable symbol it would be easy to control the eigenvalues. However, due to the boundedness of the region of interest in Theorem~\ref{thm whole system infty admissibility}, the symbol $q$ would have to be essentailly bounded, what is a neccessary and sufficient condition for the boundedness of the multiplication operator $M_q$ and would limit further considerations to uniformly bounded semigroups.

\subsection{Reciprocal system}\label{Example: reciprocal system}
Following \cite{Curtain_2003} we introduce the notion of a \textit{state linear system} $\Sigma(A,B,C,D)$ considered on the extrapolation space $X_{-1}$, where $B\in\mathcal{L}(U,X_{-1})$, $C\in\mathcal{L}(X_{1},Y)$, $D\in\mathcal{L}(U,Y)$, $A$ generates the semigroup $(T(t))$  on $X_{-1}$ and $G$ is a transfer function of this system. 

Suppose that system $\Sigma(A,B,C,D)$ is such that $0\in\rho(A)$. Then its \textit{reciprocal system} is the state linear system $\Sigma(A^{-1},A^{-1}B,-CA^{-1},G(0))$. This means that for a diagonal generator $A$ with eigenvalues $(\lambda_k)_{k\in\NN}$ the generator $A^{-1}$ of the reciprocal system has eigenvalues $(\lambda_k^{-1})_{k\in\NN}$.

Note that by Theorem 5 of \cite{Curtain_2003}, the operator $B$ is admissible for the semigroup $(T(t))$
if and only if $A^{-1}B$ is admissible for the reciprocal semigroup generated by $A^{-1}$.

Consider a heat propagation model in a homogeneous rod with zero temperature imposed on its both ends (see \cite[Example 2.6.9]{Tucsnak_Weiss} for more details). In terms of PDEs this model takes the form
\begin{equation}\label{eqn heat in rod by PDEs}
\left\{\begin{array}{lr}
        \frac{\partial w}{\partial t}(x,t)=\frac{\partial^2 w}{\partial x^2}(x,t),& x\in(0,\pi),t\geq0,\\
        w(0,t)=0,\ w(\pi,t)=0,& t\in[0,\infty),\\
        w(x,0)=w_0(x), & x\in(0,\pi),\\
        \end{array}
\right.
\end{equation}
where the temperature profile belongs to the state space $X=L^2(0,\pi)$, the initial condition (the initial temperature distribution) is $w_0\in W^{2,2}(0,\pi)\cap W_{0}^{1,2}(0,\pi)$.

Define
\[
Az:=\frac{d^2z}{dx^2},\quad D(A):= W^{2,2}(0,\pi)\cap W_{0}^{1,2}(0,\pi),
\]
and reformulate \eqref{eqn heat in rod by PDEs} into an abstract setting
\begin{equation}\label{eqn heat in rod abstract}
\dot{z}(t)=Az(t),\quad z(0)=w_0.
\end{equation}
Note also that $0\in\rho(A)$. For $k\in\NN$ let $\phi_k\in D(A)$, $\phi_k(x):=\sqrt{\frac{2}{\pi}}\sin(kx)$ for every $x\in(0,\pi)$. Then $(\phi_k)_{k\in\NN}$ is an orthornormal Riesz basis in $X$ and 
\[
 A\phi_k=-k^2\phi_k \qquad \forall k\in\NN.
\]
Using standard Hilbert space methods and transforming system \eqref{eqn heat in rod abstract} into the $l^2$ space (we use the same notation for the $l^2$ version of \eqref{eqn heat in rod abstract}) we see that the associated eigenvalue sequence $(\lambda_k)_{k\in\NN}$ is $\lambda_k=-k^2$ for every $k\in\NN$.

Take the delay $\tau=1$. Then the system being the reciprocal of \eqref{eqn heat in rod abstract} has a generator with a sequence of eigenvalues $(-\frac{1}{k^2})_{k\in\NN}$ fulfilling the assumption of Theorem~\ref{thm whole system infty admissibility}.

\section{Conclusions}\label{sec:5}

We have cast our results in the language of infinite-time admissibility, since this allowed us 
to make use of Laplace transform techniques, but since for exponentially stable
systems (with $\sup \re \lambda_k < 0 $)
this is equivalent to finite-time admissibility,   similar conclusions hold in this situation
as in our main theorem, Theorem \ref{thm whole system infty admissibility}.

The region $\Lambda_\tau$ is a very natural one to find in our analysis, as may be seen by observing
that the system with transfer function $1/(s+\lambda e^{-s\tau})$ (where $\tau>0$ and
$\lambda \in \CC$) is $H^\infty$ stable if and only if $\lambda \in \Lambda_\tau$.
Thus, paradoxically, a large negative eigenvalue $\lambda$, although seemingly contributing to
stability, actually causes destabilization, and loss of admissibility, in the presence of delays. 
Thus for a system such as the heat equation, where the set of eigenvalues is not contained in any
single $\Lambda_\tau$, one cannot expect a positive result in the presence of delay. 

This is also interesting from the reciprocal systems point of view, as given in Example \ref{Example: reciprocal system}, for the following reason. According to \cite[Theorem 5]{Curtain_2003}, $B$ is an infinite-time admissible operator if and only if $A^{-1}B$ is. As our analysis shows, adding a positive delay breaks this symmetry.

The last conclusion concerns the open question we formulated in \cite{Partington_Zawiski_2018a}, where we looked for admissibility criteria of retarded delay systems formed by contraction semigroups. In light of our results for diagonal state-delayed system it seems that contraction is not a sufficient condition for admissibility of a diagonal retarded delay system. Instead, sufficiency is reached when the sequence of eigenvalues of the undelayed semigroup fulfils a condition similar to $\lambda_k\in\Lambda_\tau$.

\subsection*{Acknowledgements}
This project has received funding from the European Union's Horizon 2020 research and innovation programme under the Marie Sk{\l}odowska-Curie grant agreement No 700833.\\



\bibliography{delay_systems_library}

\bibliographystyle{amsplain}


\end{document}